\newtheorem{thm}{Theorem}[section]
\newtheorem{prop}[thm]{Proposition}
\newtheorem{lem}[thm]{Lemma}
\newtheorem{cor}[thm]{Corollary}
\theoremstyle{definition}
\newtheorem{definition}[thm]{Definition}
\theoremstyle{remark}
\numberwithin{equation}{section}
\newcommand{\ff}{\mathfrak{f}}
\newcommand{\fg}{\mathfrak{g}}
\newcommand{\fh}{\mathfrak{h}}
\newcommand{\fpf}{\mathrm{\tiny fpf}}
\newcommand{\bZ}{\mathbb{Z}}
\newcommand{\conj}{\mathrm{conj}}
\newcommand{\Perm}{\mathrm{Perm}}
\newcommand{\Soc}{\mathrm{Soc}}
\newcommand{\Map}{\mathrm{Map}}
\newcommand{\Hol}{\mathrm{Hol}}
\newcommand{\NHol}{\mathrm{NHol}}
\newcommand{\Aut}{\mathrm{Aut}}
\newcommand{\Reg}{\mathrm{Reg}}
\newcommand{\End}{\mathrm{End}}
\newcommand{\Inn}{\mathrm{Inn}}
\newcommand{\Out}{\mathrm{Out}}
\newcommand{\Hom}{\mathrm{Hom}}
\newcommand{\Norm}{\mathrm{Norm}}
\newcommand{\HH}{\mathcal{H}}
\begin{document}

\large 

\title[On the multiple holomorph of a finite almost simple group]{On the multiple holomorph of \\a finite almost simple group}

\author{Cindy (Sin Yi) Tsang}
\address{School of Mathematics (Zhuhai), Sun Yat-Sen University, China}
\email{zengshy26@mail.sysu.edu.cn}\urladdr{http://sites.google.com/site/cindysinyitsang/} 

\date{\today}

\maketitle

\begin{abstract}Let $G$ be a group. Let $\Perm(G)$ denote its symmetric group and write $\Hol(G)$ for the normalizer  of the subgroup of left translations in $\Perm(G)$. The multiple holomorph $\NHol(G)$ of $G$ is in turn defined to be the normalizer of $\Hol(G)$ in $\Perm(G)$. In this paper, we shall show that the quotient group $\NHol(G)/\Hol(G)$ has order two when $G$ is finite and almost simple. As an application of our techniques, we shall also develop a method to count the number of Hopf-Galois structures of isomorphic type on a finite almost simple extension in terms of fixed point free endomorphisms.
\end{abstract}

\tableofcontents

\section{Introduction}

Let $G$ be a group and write $\Perm(G)$ for its symmetric group. Recall that a subgroup $N$ of $\Perm(G)$ is said to be \emph{regular} if the map
\[ \xi_{N} : N\longrightarrow G;\hspace{1em} \xi_{N}(\eta) = \eta(1_G)\]
is bijective, or equivalently, if the $N$-action on $G$ is both transitive and free. For example, both $\lambda(G)$ and $\rho(G)$ are regular subgroups of $\Perm(G)$, where
\[\begin{cases}
\lambda:G\longrightarrow\Perm(G);\hspace{1em}\lambda(\sigma) = (x\mapsto \sigma x)\\
\rho:G\longrightarrow\Perm(G);\hspace{1em}\rho(\sigma) = (x\mapsto x\sigma^{-1})
\end{cases}\]
denote the left and right regular representations of $G$, respectively. Plainly, we have $\lambda(G)$ are $\rho(G)$ are equal precisely when $G$ is abelian. Recall further that the \emph{holomorph of $G$} is defined to be
\begin{equation}\label{Hol(G)} \Hol(G) = \rho(G) \rtimes \Aut(G).\end{equation}
Alternatively, it is not hard to verify that
\[\Norm_{\Perm(G)}(\lambda(G)) = \Hol(G) = \Norm_{\Perm(G)}(\rho(G)).\]
Then, it seems natural to ask whether $\Perm(G)$ has other regular subgroups which also have normalizer equal to $\Hol(G)$. Given any regular subgroup $N$ of $\Perm(G)$, observe that the bijection $\xi_N$ induces an isomorphism
\begin{equation}\label{Xi def}\Xi_N : \Perm(N) \longrightarrow \Perm(G);\hspace{1em}\Xi_N(\pi) = \xi_N\circ\pi\circ\xi_N^{-1}\end{equation}
under which $\lambda(N)$ is sent to $N$. Thus, in turn $\Xi_N$ induces an isomorphism
\[\Hol(N)\simeq \Norm_{\Perm(G)}(N),\]
and so we have
\[ \Norm_{\Perm(G)}(N) = \Hol(G) \mbox{ implies } \Hol(N) \simeq \Hol(G).\]
However, in general, the converse is false, and non-isomorphic groups (of the same order) can have isomorphic holomorphs. Nevertheless, let us restrict to the regular subgroups $N$ which are isomorphic to $G$, and consider
\[\HH_0(G) = \left\lbrace\begin{array}{c}\mbox{regular subgroups $N$ of $\Perm(G)$ isomorphic to $G$}\\\mbox{and such that $\Norm_{\Perm(G)}(N) = \Hol(G)$}\end{array}\right\rbrace.\]
This set was first studied by G. A. Miller \cite{Miller}. More specifically, he defined the \emph{multiple holomorph of $G$} to be
\[ \NHol(G) = \Norm_{\Perm(G)}(\Hol(G)),\]
which clearly acts on $\HH_0(G)$ via conjugation, and he showed that this action is transitive so the quotient group
\[ T(G) = \frac{\NHol(G)}{\Hol(G)}\]
acts regularly on $\mathcal{H}_0(G)$; or see Section~\ref{prelim sec} below for a proof. In \cite{Miller}, he also determined the structure of $T(G)$ for finite abelian groups $G$. Later in \cite{Mills}, W. H. Mills extended this to all finitely generated abelian groups $G$, which was also redone in \cite{Caranti1} using a different approach. Initially, the study of $T(G)$ did not attract much attention, other than \cite{Miller} and \cite{Mills}.  But recently in \cite{Kohl NHol}, T. Kohl revitalized this line of research by computing $T(G)$ for dihedral and dicyclic groups. His work in turn motivated the calculation of $T(G)$ for some other families of finite groups; see \cite{Caranti2} and \cite{Caranti3}. The main purpose of this paper is to compute $T(G)$ for a new family of finite groups $G$.

\vspace{1.5mm}

To explain our motivation, first notice that elements of $\HH_0(G)$ are normal subgroups of $\Hol(G)$; this is known and also see Section~\ref{prelim sec} below for a proof. Instead of $\HH_0(G)$, let us consider the possibly larger sets
\begin{align*}
\HH_1(G) & = \{\mbox{normal and regular subgroups of }\Hol(G)\},\\
\HH_2(G) & = \{\mbox{regular subgroups of $\Hol(G)$ isomorphic to $G$}\}.
\end{align*}
Then, we have the inclusions
\[ \HH_0(G) \subset \HH_1(G)\mbox{ and }\HH_0(G)\subset\HH_2(G).\]
If $G$ is finite and non-abelian simple, then we know that
\[ \HH_2(G) = \{\lambda(G),\rho(G)\}\]
by the proof of \cite[Theorem 4]{Childs non-abelian}, and this in turn implies that
\begin{equation}\label{T order 2} \HH_0(G) = \{\lambda(G),\rho(G)\}\mbox{ whence }T(G) \simeq \bZ/2\bZ.\end{equation}
Inspired by this observation, it seems natural to ask whether the same or at least a similar phenomenon holds for other finite groups $G$ which are close to being non-abelian simple. More precisely, let us consider the following three generalizations of non-abelian simple groups.

\begin{definition}A group $G$ is said to be
\begin{enumerate}[(1)]
\item \emph{quasisimple} if $G=[G,G]$ and $G/Z(G)$ is simple, where $[G,G]$ and $Z(G)$ denote the commutator subgroup and the center of $G$, respectively.
\item \emph{characteristically simple} if it has no non-trivial proper characteristic subgroup; let us note that for finite $G$, this is equivalent to $G=T^n$ for some simple group $T$ and natural number $n$.
\item \emph{almost simple} if $\Inn(T)\leq G\leq\Aut(T)$ for some non-abelian simple group $T$, where $\Inn(T)$ denotes the inner automorphism group of $T$; let us note that $\Inn(T)$ is the socle of $G$ in this case, as shown in Lemma~\ref{socle lem} below, for example.
\end{enumerate}
\end{definition}

If $G$ is finite and quasisimple, then we know that  
\[  \HH_2(G) = \{\lambda(G),\rho(G)\}\]
by \cite[(1.1) and Theorem 1.3]{Tsang HG}, whence (\ref{T order 2}) holds as above. However, if $G$ is finite and non-abelian characteristically simple or almost simple, then the size of $\HH_2(G)$ can be arbitrarily large as the order of $G$ increases, by \cite{Tsang char simple} and \cite[Theorem 5]{Childs non-abelian}. Nevertheless, if $G$ is finite and non-abelian characteristically simple, then we know that
\[ \HH_1(G) = \{\lambda(G),\rho(G)\}\]
by a special case of \cite[Theorem 7.7]{Caranti2}, and thus (\ref{T order 2}) holds as well. Our result is that if $G$ is finite and almost simple, then the same phenomenon occurs. More specifically, we shall prove:

\begin{thm}\label{main thm}Let $G$ be a finite almost simple group. Then, we have 
\[\HH_1(G) = \{\lambda(G),\rho(G)\}.\]
In particular, the statement (\ref{T order 2}) holds.
\end{thm}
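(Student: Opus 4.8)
The plan is to pin down an arbitrary $N\in\HH_1(G)$ by intersecting it with the minimal normal subgroups of $\Hol(G)$. Write $S=\Soc(G)=\Inn(T)$, so that $S\cong T$ is nonabelian simple. Since $S$ is the unique minimal normal subgroup of $G$, every nontrivial normal subgroup of $G$ (hence every nontrivial characteristic subgroup) contains $S$; moreover $C_G(S)=1$ and $Z(G)=1$. From these facts I would first record that $\rho(S)$ and $\lambda(S)$ are nonabelian simple normal subgroups of $\Hol(G)$ (in fact precisely its minimal normal subgroups) and that $\rho(G)\cap\lambda(G)=1$.

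The technical core of the argument is the pair of identities
\[ C_{\Hol(G)}(\rho(S))=\lambda(G), \qquad C_{\Hol(G)}(\lambda(S))=\rho(G). \]
To establish the first I would write a general element of $\Hol(G)=\rho(G)\rtimes\Aut(G)$ as $\rho(g)\alpha$ and compute directly that it centralizes $\rho(S)$ exactly when $\alpha$ restricts to $\conj_{g^{-1}}$ on $S$; since $C_G(S)=1$ makes the restriction map $\Aut(G)\to\Aut(S)$ injective, this forces $\alpha=\conj_{g^{-1}}$ on all of $G$, so that $\rho(g)\alpha=\lambda(g^{-1})$. The second identity then follows by conjugating with the inversion map $\iota\colon x\mapsto x^{-1}$, which normalizes $\Hol(G)$, commutes with every element of $\Aut(G)$, and interchanges $\rho(G)$ with $\lambda(G)$ and $\rho(S)$ with $\lambda(S)$.

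With these in hand the conclusion follows from a short case analysis. Let $N\in\HH_1(G)$, so $N\trianglelefteq\Hol(G)$ and $|N|=|G|$ by regularity. Each of $\rho(S)\cap N$ and $\lambda(S)\cap N$ is normal in $\Hol(G)$, hence normal in the simple group $\rho(S)$ (resp. $\lambda(S)$), and so is either trivial or the whole of it. They cannot both be trivial, for then $[\rho(S),N]=1=[\lambda(S),N]$ would give $N\leq C_{\Hol(G)}(\rho(S))\cap C_{\Hol(G)}(\lambda(S))=\lambda(G)\cap\rho(G)=1$, contrary to $|N|=|G|>1$. Nor can both be the whole group, for then $\rho(S)\times\lambda(S)\leq N$ (these commute and meet trivially), a subgroup of order $|S|^2$, yet its orbit of $1_G$ is $\{s_1s_2^{-1}:s_1,s_2\in S\}=S$, of size $|S|<|S|^2$, so $\rho(S)\times\lambda(S)$ does not act freely on $G$, contradicting that a subgroup of the regular group $N$ must act freely. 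Hence exactly one of the two intersections is the whole group; if $\rho(S)\leq N$ and $\lambda(S)\cap N=1$ then $N\leq C_{\Hol(G)}(\lambda(S))=\rho(G)$, so $N=\rho(G)$ by order, and symmetrically the other case gives $N=\lambda(G)$. Since $\lambda(G)$ and $\rho(G)$ plainly lie in $\HH_1(G)$, we obtain $\HH_1(G)=\{\lambda(G),\rho(G)\}$. Finally $\HH_0(G)\subseteq\HH_1(G)$ contains the two distinct subgroups $\lambda(G)\neq\rho(G)$ (as $G$ is nonabelian), and $T(G)$ acts regularly on $\HH_0(G)$, whence $|T(G)|=2$ and (\ref{T order 2}) holds.

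The step I expect to be the main obstacle is the centralizer computation $C_{\Hol(G)}(\rho(S))=\lambda(G)$. It is the only place where the almost simple hypothesis is genuinely used (through $C_G(S)=1$ and the resulting injectivity of $\Aut(G)\to\Aut(S)$), and carrying it out cleanly requires some care with the action conventions and with the description of $\lambda(g)$ as an element of $\rho(G)\rtimes\Aut(G)$. Everything after that point is essentially formal and, in particular, needs no case-by-case appeal to the classification of finite simple groups.
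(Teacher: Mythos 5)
Your proof is correct, but it takes a genuinely different route from the paper's. The paper encodes a regular normal subgroup $N\leq\Hol(G)$ as a pair $(\ff,\fh)$ of homomorphisms $\Gamma\to\Aut(G)$ via Propositions~\ref{description1}, \ref{description2} and \ref{normal prop}, shows both images must contain $\Inn(T^\#)$, and then runs a four-way case analysis on $\ker(\ff)$ and $\ker(\fh)$ that invokes three consequences of the classification (Lemma~\ref{ASG lem}): the nonexistence of fixed point free automorphisms of $T$, the solvability of $\Out(T)$, and the bound $|T|/|\Out(T)|\geq 30$. You instead work directly inside $\Hol(G)$ with the two simple normal subgroups $\rho(S)$ and $\lambda(S)$, where $S=\Soc(G)$ (normal in $G$ and characteristic in $G$, hence each is normal in $\Hol(G)$), and your two centralizer identities do hold: the computation $(\rho(g)\alpha)\rho(s)(\rho(g)\alpha)^{-1}=\rho(g\alpha(s)g^{-1})$ together with the injectivity of the restriction $\Aut(G)\to\Aut(S)$ (the argument of Lemma~\ref{Aut(G) lem}) forces $\alpha=\conj(g^{-1})$ and hence $\rho(g)\alpha=\lambda(g^{-1})$. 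After that, only elementary facts are needed: $N\cap\rho(S)$ and $N\cap\lambda(S)$ are each trivial or full by simplicity, $[A,B]\leq A\cap B$ for normal subgroups rules out the ``both trivial'' case via $\lambda(G)\cap\rho(G)=1$, and the freeness of the $N$-action rules out the ``both full'' case since the orbit of $1_G$ under $\rho(S)\lambda(S)$ has size $|S|$ rather than $|S|^2$. Since Lemmas~\ref{center lem}--\ref{Aut(G) lem} do not depend on the classification, your argument avoids the CFSG entirely and also bypasses the crossed-homomorphism formalism of Section~\ref{reg sec}; that is a real simplification for Theorem~\ref{main thm}. What the paper's heavier machinery buys is reusability: the fixed-point-free-pair description is exactly what is needed for the counting result of Theorem~\ref{main thm'}, where your centralizer argument is unavailable because the regular subgroups there are not assumed normal. (The parenthetical claim that $\rho(S),\lambda(S)$ are precisely the minimal normal subgroups of $\Hol(G)$ is true but unused.)
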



In order to compute $\HH_1(G)$, we shall develop a way to describe the regular subgroups of $\Hol(G)$, and not just the ones which are normal; see Section~\ref{reg sec}. These regular subgroups are closely related to Hopf-Galois structures on field extensions. In particular, by work of \cite{GP} and \cite{By96}, for any finite Galois extension $L/K$ with Galois group $G$, there is a one-to-one correspondence between Hopf-Galois structures on $L/K$ of \emph{type} $G$ and elements of $\HH_2(G)$. We shall refer the reader to \cite[Chapter 2]{Childs book} for more details. Let us mention in passing that there is also a connection between regular subgroups of $\Hol(G)$ and the non-degenerate set-theoretic solutions of the Yang-Baxter equation; see \cite{Skew braces}.

\vspace{1.5mm}

Therefore, other than $\HH_1(G)$, it is also of interest to compute $\HH_2(G)$. If $G$ is finite and non-abelian characteristically simple, then this was already done in \cite{Tsang char simple}. However, if $G$ is finite and almost simple, then as far as the author is aware, the only result is \cite[Theorem 5]{Childs non-abelian}, which says that
\[\label{Sn count} \#\HH_2(S_n) = 2\cdot(1+\#\{\sigma\in A_n : \sigma\mbox{ has order two}\})\mbox{ for all }n\geq 5.\]
Here $S_n$ and $A_n$, respectively, denote the symmetric and alternating groups on $n$ letters. Using the techniques to be developed in Section~\ref{reg sec}, which were largely motivated by \cite{Childs non-abelian}, we shall also generalize the above result as follows. Recall that an endomorphism $f$ on $G$ is said to be \emph{fixed point free} if 
\[ f(\sigma) = \sigma\mbox{ holds precisely when }\sigma = 1_G.\]
Let $\End_{\fpf}(G)$ denote the set of all such endomorphisms. Also, write $\Inn(G)$ for the inner automorphism group $G$. Then, we shall prove:

\begin{thm}\label{main thm'}Let $G$ be a finite almost simple group such that $\Inn(G)$ is the only subgroup of $\Aut(G)$ isomorphic to $G$. Then, we have
\[\#\HH_2(G) = 2\cdot\#\End_\fpf(G).\]
Moreover, in the case that $\Soc(G)$ has prime index $p$ in $G$, we have
\begin{align*} \#\End_\fpf(G) & = 1 + \#\{\sigma\in\Soc(G): \sigma\mbox{ has order $p$}\}\\&\hspace{0.9cm} + (p-2)/(p-1)\cdot\#\{\sigma\in G\setminus\Soc(G) : \sigma\mbox{ has order $p$}\},\end{align*}
where $\Soc(G)$ denotes the socle of $G$.
\end{thm}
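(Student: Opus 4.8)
The plan is to establish the two assertions separately, using the description of regular subgroups of $\Hol(G)$ from Section~\ref{reg sec}. For the equality $\#\HH_2(G) = 2\cdot\#\End_{\fpf}(G)$, I would first exhibit, for each $f\in\End_{\fpf}(G)$, the two subsets
\[ N_f = \{\rho(g)\lambda(f(g)) : g\in G\},\qquad M_f = \{\lambda(g)\rho(f(g)) : g\in G\} \]
of $\Hol(G)$. Since $f$ is an endomorphism and left and right translations commute, these are subgroups; since $Z(G)=1$ the natural surjections $G\to N_f$ and $G\to M_f$ are injective, so $N_f\simeq G\simeq M_f$; and since $f$ is fixed point free the map $g\mapsto f(g)g^{-1}$ is a bijection of $G$, which is exactly what is needed for $N_f$ and $M_f$ to be regular. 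Note that $N_{\mathrm{triv}}=\rho(G)$ and $M_{\mathrm{triv}}=\lambda(G)$, and that $N_f$ and $M_f$ are interchanged by conjugation with the inversion permutation $x\mapsto x^{-1}$ (an element of $\NHol(G)$ fixing $\Aut(G)$ and swapping $\rho(G)\leftrightarrow\lambda(G)$). One checks that a permutation in $N_f$ determines the pair $(g,f(g))$, so $f\mapsto N_f$ and $f\mapsto M_f$ are injective; and $N_f=M_{f'}$ would force $f$ to be surjective, hence an automorphism, hence a fixed point free automorphism of $G$, which is impossible as $G$ is almost simple and therefore non-solvable. This already gives $\#\HH_2(G)\geq 2\cdot\#\End_{\fpf}(G)$.

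For the reverse inequality, which is the heart of the matter, let $N\in\HH_2(G)$. The key observation is that both $\rho(G)$ and $\lambda(G)$ are normal in $\Hol(G)$ and meet trivially (as $Z(G)=1$), so $N\cap\rho(G)$ and $N\cap\lambda(G)$ are normal subgroups of $N\simeq G$ with trivial intersection; since the socle is the unique minimal normal subgroup of an almost simple group (Lemma~\ref{socle lem}), at least one of these intersections is trivial. If $N\cap\rho(G)=\{1\}$, then the projection $\Hol(G)\to\Hol(G)/\rho(G)\simeq\Aut(G)$ restricts to an isomorphism from $N$ onto a subgroup of $\Aut(G)$ isomorphic to $G$, which by hypothesis must be $\Inn(G)$; writing the elements of $N$ in the form $\rho(c(\varphi))\varphi$ with $\varphi\in\Inn(G)$ produces a bijective $1$-cocycle $c\colon\Inn(G)\to G$, and composing with $\conj\colon G\xrightarrow{\sim}\Inn(G)$ shows that $g\mapsto c(\conj(g))\cdot g$ is a fixed point free endomorphism $f$ with $N=M_f$. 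Symmetrically, if $N\cap\lambda(G)=\{1\}$ one uses $\Hol(G)/\lambda(G)\simeq\Aut(G)$ to get $N=N_f$ for some $f\in\End_{\fpf}(G)$. Hence $\HH_2(G)=\{N_f:f\in\End_{\fpf}(G)\}\cup\{M_f:f\in\End_{\fpf}(G)\}$, and combined with the distinctness established above this yields the claimed equality. This reduction — and in particular the bookkeeping that extracts $f$ from a bijective cocycle and checks $N=N_f$ or $M_f$ exactly — is the step I expect to be the most technical.

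For the second formula, assume $\Soc(G)$ has prime index $p$ in $G$. Then the only normal subgroups of $G$ are $\{1\}$, $\Soc(G)$ and $G$, so every $f\in\End(G)$ has $\ker f$ among these. If $\ker f=G$, then $f$ is the trivial endomorphism, which is fixed point free, contributing the summand $1$. If $\ker f=\{1\}$, then $f$ is an automorphism and hence not fixed point free, contributing nothing. If $\ker f=\Soc(G)$, then $\mathrm{im}(f)$ is a subgroup $H$ of order $p$ and $f$ factors as $G\twoheadrightarrow G/\Soc(G)$ followed by an isomorphism onto $H$, giving $p-1$ such $f$ for each $H$; one then checks that if $H\subseteq\Soc(G)$ all of these are fixed point free, and since the order-$p$ subgroups of $\Soc(G)$ partition the set of order-$p$ elements of $\Soc(G)$ into blocks of size $p-1$, these account for exactly $\#\{\sigma\in\Soc(G):\sigma\text{ has order }p\}$ of them, while if $H\not\subseteq\Soc(G)$ then $H$ is a complement to $\Soc(G)$, the restriction of $f$ to $H$ is an automorphism of $H\simeq\bZ/p\bZ$, and $f$ is fixed point free precisely when this automorphism is non-trivial, excluding exactly one of the $p-1$ choices, so that the complements — of which there are $\#\{\sigma\in G\setminus\Soc(G):\sigma\text{ has order }p\}/(p-1)$ — contribute $\tfrac{p-2}{p-1}\cdot\#\{\sigma\in G\setminus\Soc(G):\sigma\text{ has order }p\}$. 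Adding the four contributions gives the formula.

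Besides the reduction in the second paragraph, the other input I would want to isolate as a lemma is that a finite almost simple group has no fixed point free automorphism (reduce to its socle and quote the classification-based fact that a finite group admitting a fixed point free automorphism is solvable); this is used both for the distinctness of the two families $\{N_f\}$, $\{M_f\}$ and for the vanishing of the $\ker f=\{1\}$ contribution in the count of $\End_{\fpf}(G)$. It is also worth noting where the hypothesis that $\Inn(G)$ is the only subgroup of $\Aut(G)$ isomorphic to $G$ enters: precisely in identifying the faithful projection of $N$ with $\Inn(G)$, without which one could in principle pick up further regular subgroups.
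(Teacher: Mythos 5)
Your proposal is correct and follows essentially the same route as the paper: both arguments rest on Lemma~\ref{socle lem} (to force one of the two ``components'' of a regular subgroup to be trivial/injective), on the hypothesis that $\Inn(G)$ is the unique copy of $G$ in $\Aut(G)$ (to pin down the faithful projection), on the non-existence of fixed point free automorphisms (to rule out the overlap of the two families and the $\ker f=1$ case), and on the same kernel trichotomy $1$, $\Soc(G)$, $G$ with the same $(p-1)$-versus-$(p-2)$ count for the second formula. The only difference is bookkeeping: you parametrize the regular subgroups directly as $N_f$ and $M_f$ and build an explicit bijection with $\End_{\fpf}(G)\times\{1,2\}$, whereas the paper counts pairs of homomorphisms $(\ff,\fh)$ via formula (\ref{HH2 count}) and divides by $|\Aut(G)|$.
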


It is well-known, or by Lemma~\ref{Aut(G) lem} below, that for $G = \Aut(T)$ where $T$ is a non-abelian simple group, we have $\Aut(G)\simeq G$ and so the first hypothesis of Theorem~\ref{main thm'} is trivially satisfied. Now, among the $26$ sporadic simple groups $T$, exactly $12$ of them have non-trivial outer automorphism group $\Out(T)$, in which case $\Out(T)$ has order two. By plugging in $p=2$ in Theorem~\ref{main thm'}, and using the element structure of $T$ given in the \textsc{Atlas} \cite{ATLAS}, we then obtain the values of $\#\HH_2(G)$ as follows. The notation is the same as in the \textsc{Atlas} \cite{ATLAS}. 
\begin{longtable}{|c|c|c|}
\hline
$T$ & no. of elements of order two in $T$ & $\#\HH_2(G)$ for $G=\Aut(T)$\\\hline\hline
$\mbox{M}_{12}$ & $891$ & $1,784$\\
$\mbox{M}_{22}$ & $1,155$ & $2,312$\\
$\mbox{HS}$ & $21,175$ & $42,352$\\
$\mbox{J}_2$ & $2,835$ & $5,672$\\
$\mbox{McL}$ & $22,275$ & $44,552$\\
$\mbox{Suz}$ & $2,915,055$& $5,830,112$\\
$\mbox{He}$ & $212,415$ & $424,832$\\
$\mbox{HN}$ & $75,603,375$ & $151,206,752$\\
$\mbox{Fi}_{22}$ & $37,706,175$ & $75,412,352$\\
$\mbox{Fi'}_{24}$ & $7,824,165,773,823$ & $15,648,331,547,648$ \\
$\mbox{O'N}$ & $2,857,239$ & $5,714,480$\\
$\mbox{J}_3$ & $26,163$ &$52,328$\\
\hline
\end{longtable}

\vspace{-2mm}

Since $\#\HH_2(T) = 2$ for all finite non-abelian simple groups $T$, the number $\#\HH_2(G)$ is now known for all almost simple groups $G$ of sporadic type.

\vspace{1.5mm}

Finally, let us remark that if $G/\Soc(G)$ is not cyclic (of prime order), then the enumeration of $\End_{\fpf}(G)$ becomes much more complicated. Currently, the author does not have a systematic way of treating the general case.

\section{Preliminaries on the multiple holomorph}\label{prelim sec}

In this section, we shall give a proof of the fact that the action of $\NHol(G)$ on the set $\HH_0(G)$ via conjugation is transitive, and the fact that elements of $\HH_0(G)$ are normal subgroups of $\Hol(G)$. Both of them are already known in the literature and are consequences of the next simple observation.

\begin{lem}\label{reg sub conj}Isomorphic regular subgroups of $\Perm(G)$ are conjugates.
\end{lem}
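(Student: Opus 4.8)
The plan is to use the universal-transitivity of symmetric groups: any bijection between two sets extends to a conjugation on their symmetric groups, and one can arrange this conjugation to match up prescribed regular subgroups. So suppose $M$ and $N$ are regular subgroups of $\Perm(G)$ and fix an isomorphism $\phi : M \to N$. First I would transport everything to the ``abstract'' model via the maps $\xi_M$ and $\xi_N$ of the introduction: recall $\xi_M : M \to G$, $\eta \mapsto \eta(1_G)$, is a bijection because $M$ is regular, and likewise for $\xi_N$. The key point is that $M$ acts on $G$ (by restriction of the $\Perm(G)$-action) and, because the action is regular, this $M$-action is equivalent, via $\xi_M$, to the action of $M$ on itself by left translation; concretely $\xi_M(\eta \cdot m) $ behaves like left multiplication by $\eta$ in the group $M$. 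The same holds for $N$.

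With this in hand, the construction is as follows. Define $\theta : G \to G$ by $\theta = \xi_N \circ \phi \circ \xi_M^{-1}$; this is a bijection of $G$, hence an element of $\Perm(G)$. I claim conjugation by $\theta$ sends $M$ to $N$. Indeed, take $m \in M$ and $x \in G$, and write $x = \xi_M(\mu)$ for the unique $\mu \in M$. Unwinding the definitions and using the two action-equivalences from the previous paragraph, $\theta \circ m \circ \theta^{-1}$ applied to $x$ equals $\xi_N$ evaluated at $\phi(m \cdot \mu) = \phi(m)\phi(\mu) = \phi(m)\cdot\phi(\mu)$ (left multiplication inside $N$), which is exactly $\phi(m)$ acting on $\xi_N(\phi(\mu)) = \theta(x)$. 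Hence $\theta m \theta^{-1} = \phi(m)$ as elements of $\Perm(G)$, so $\theta M \theta^{-1} = \phi(M) = N$. This shows $M$ and $N$ are conjugate in $\Perm(G)$.

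The only genuinely careful step is the bookkeeping that identifies the restricted $M$-action on $G$ (through $\xi_M$) with left translation of $M$ on itself — i.e. verifying $\xi_M^{-1}(\eta(x)) = \eta \cdot \xi_M^{-1}(x)$ for $\eta \in M$, $x \in G$, where $\cdot$ on the right is multiplication in $M \le \Perm(G)$. This is immediate: if $\mu = \xi_M^{-1}(x)$ then $\mu(1_G) = x$, so $(\eta\mu)(1_G) = \eta(\mu(1_G)) = \eta(x)$, giving $\xi_M^{-1}(\eta(x)) = \eta\mu = \eta\cdot\xi_M^{-1}(x)$. Everything else is formal composition of bijections, and no obstacle remains; the lemma follows. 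One may also phrase the whole argument through the isomorphism $\Xi_N$ of \eqref{Xi def}, under which $\lambda(N) \mapsto N$, reducing the statement to the obvious fact that $\lambda(M)$ and $\lambda(N)$ are conjugate whenever $M \cong N$.
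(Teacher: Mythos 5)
Your proof is correct and is essentially the paper's own argument: you conjugate by exactly the same permutation $\xi_N\circ\phi\circ\xi_M^{-1}$ that the paper uses, merely verifying the identity $\theta m\theta^{-1}=\phi(m)$ by direct unwinding instead of by composing the isomorphisms $\Xi_{N_2}\circ\Xi_\phi\circ\Xi_{N_1}^{-1}$ (a reformulation you yourself note at the end). No substantive difference.
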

\begin{proof}Let $N_1$ and $N_2$ be any two isomorphic regular subgroups of $\Perm(G)$. Let $\varphi:N_1\longrightarrow N_2$ be an isomorphism and note that the isomorphism
\[ \Xi_\varphi : \Perm(N_1) \longrightarrow \Perm(N_2);\hspace{1em}\Xi_\varphi(\pi) = \varphi\circ\pi\circ\varphi^{-1}\]
sends $\lambda(N_1)$ to $\lambda(N_2)$. For $i=1,2$, recall that the isomorphism $\Xi_{N_i}$ defined as in (\ref{Xi def}) sends $\lambda(N_i)$ to $N_i$. It follows that $\Xi_{N_2}\circ\Xi_\varphi\circ\Xi_{N_1}^{-1}$ maps $N_1$ to $N_2$. We then deduce that $N_1$ and $N_2$ are conjugates via $\xi_{N_2}\circ\varphi\circ\xi_{N_1}^{-1}$.
\end{proof}

Lemma~\ref{reg sub conj} implies that regular subgroups of $\Perm(G)$ isomorphic to $G$ are precisely the conjugates of $\lambda(G)$. For any $\pi\in\Perm(G)$, we have
\[ \Norm_{\Perm(G)}(\pi \lambda(G)\pi^{-1}) = \pi\Hol(G)\pi^{-1},\]
which is equal to $\Hol(G)$ if and only if $\pi\in\NHol(G)$. It follows that
\begin{equation}\label{H0 new}\HH_0(G) = \{\pi\lambda(G)\pi^{-1} : \pi \in\NHol(G) \}.\end{equation}
Thus, clearly $\NHol(G)$ acts transitively on $\HH_0(G)$ via conjugation. Since the stabilizer of any element of $\HH_0(G)$ under this action is equal to $\Hol(G)$, the quotient $T(G)$ acts regularly on $\HH_0(G)$. For any $\pi\in\Perm(G)$, we have
\[ \pi\lambda(G)\pi^{-1}\lhd \Hol(G)\iff\ \begin{cases}
\pi\lambda(G)\pi^{-1}\leq \Hol(G),\\\Hol(G)\leq\Norm_{\Perm(G)}(\pi\lambda(G)\pi^{-1}).
\end{cases}\]
Since $\lambda(G)\leq\Hol(G)$, both of the conditions on the right are clearly satisfied for $\pi\in\NHol(G)$, and so elements of $\HH_0(G)$ are normal subgroups of $\Hol(G)$. In the case that $G$ is finite, the second condition on the right is satisfied only for $\pi\in\NHol(G)$, whence we have the alternative description
\[ \HH_0(G) = \{ N \lhd \Hol(G):N\simeq G\mbox{ and $N$ is regular}\}\]
for $\HH_0(G)$ in addition to (\ref{H0 new}), and in particular $\HH_0(G) = \HH_1(G)\cap\HH_2(G)$.

\section{Descriptions of regular subgroups in the holomorph}\label{reg sec}


In this section, let $\Gamma$ be a group of the same cardinality as $G$. Plainly, the regular subgroups of $\Hol(G)$ isomorphic to $\Gamma$ are the images of the homomorphisms in the set
\[ \Reg(\Gamma,\Hol(G)) = \{\mbox{injective }\beta\in\Hom(\Gamma,\Hol(G)):\beta(\Gamma) \mbox{ is regular}\}.\]
Note that for $G$ and $\Gamma$ finite, the map $\beta$ is automatically injective when $\beta(\Gamma)$ is regular. Below, we shall give two different ways of describing this set, and it shall be helpful to recall the definition of $\Hol(G)$ given in (\ref{Hol(G)}).

\vspace{1.5mm}

The first description uses bijective crossed homomorphisms.


\begin{definition}\label{xhom def}Given $\ff\in\Hom(\Gamma,\Aut(G))$, a map $\fg\in\Map(\Gamma,G)$ is said to be a \emph{crossed homomorphism with respect to $\ff$} if
\[ \fg(\gamma\delta) = \fg(\gamma)\cdot\ff(\gamma)(\fg(\delta))\mbox{ for all }\gamma,\delta\in \Gamma.\]
Write $Z_\ff^1(\Gamma,G)$ for the set of all such maps. Also, let $Z_\ff^1(\Gamma,G)^*$ and $Z_\ff^1(\Gamma,G)^\circ$, respectively, denote the subsets consisting of those maps which are bijective and injective. Note that these two subsets coincide when $G$ and $\Gamma$ are finite.
\end{definition}

\begin{prop}\label{description1}For $\ff\in\Map(\Gamma,\Aut(G))$ and $\fg\in \Map(\Gamma,G)$, define
\[ \beta_{(\ff,\fg)}: \Gamma\longrightarrow \Hol(G); \hspace{1em}\beta_{(\ff,\fg)}(\gamma) = \rho(\fg(\gamma))\cdot\ff(\gamma).\]
Then, we have
\begin{align*}
\Map(\Gamma,\Hol(G)) & = \{\beta_{(\ff,\fg)}:\ff\in\Map(\Gamma,\Aut(G)),\,\fg\in\Map(\Gamma,G)\},\\
\Hom(\Gamma,\Hol(G)) & = \{\beta_{(\ff,\fg)}: \ff\in\Hom(\Gamma,\Aut(G)),\,\fg\in Z_\ff^1(\Gamma,G) \},\\
\Reg(\Gamma,\Hol(G)) & = \{\mathrm{injective}\mbox{ }\beta_{(\ff,\fg)}: \ff\in\Hom(\Gamma,\Aut(G))\mbox,\,\fg\in Z_\ff^1(\Gamma,G)^*\}. 
\end{align*}
\end{prop}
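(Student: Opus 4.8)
The plan is to verify the three stated equalities in order, since each builds on the previous one. First I would prove the decomposition at the level of maps: by the semidirect product structure~(\ref{Hol(G)}), every element of $\Hol(G)$ can be written uniquely as $\rho(a)\cdot\theta$ with $a\in G$ and $\theta\in\Aut(G)$; applying this pointwise to a map $\Gamma\to\Hol(G)$ produces a unique pair $(\ff,\fg)$ with $\ff\in\Map(\Gamma,\Aut(G))$ and $\fg\in\Map(\Gamma,G)$, which gives the first equality. This step is essentially bookkeeping, but I would be careful to fix a convention for how $\rho(a)$ and $\theta$ commute past each other inside $\Perm(G)$, since that convention controls the form of the crossed-homomorphism identity.

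Next I would determine when $\beta_{(\ff,\fg)}$ is a homomorphism. Computing $\beta_{(\ff,\fg)}(\gamma)\cdot\beta_{(\ff,\fg)}(\delta) = \rho(\fg(\gamma))\cdot\ff(\gamma)\cdot\rho(\fg(\delta))\cdot\ff(\delta)$ and using the relation $\theta\cdot\rho(a) = \rho(\theta(a))\cdot\theta$ in $\Hol(G)$, this becomes $\rho(\fg(\gamma)\cdot\ff(\gamma)(\fg(\delta)))\cdot\ff(\gamma)\ff(\delta)$; comparing with $\beta_{(\ff,\fg)}(\gamma\delta) = \rho(\fg(\gamma\delta))\cdot\ff(\gamma\delta)$ and invoking uniqueness of the decomposition, I get that $\beta_{(\ff,\fg)}$ is a homomorphism precisely when $\ff$ is a homomorphism and $\fg$ is a crossed homomorphism with respect to $\ff$ in the sense of Definition~\ref{xhom def}. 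That yields the second equality.

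For the third equality, given that $\beta_{(\ff,\fg)}$ is already a homomorphism, I would show that $\beta_{(\ff,\fg)}(\Gamma)$ is regular if and only if $\fg$ is bijective. The key is to evaluate at the identity: using $\rho(a)(1_G) = a$ and $\theta(1_G) = 1_G$, one gets $\beta_{(\ff,\fg)}(\gamma)(1_G) = \rho(\fg(\gamma))(\ff(\gamma)(1_G)) = \rho(\fg(\gamma))(1_G) = \fg(\gamma)$. Hence the orbit map $\eta\mapsto\eta(1_G)$ restricted to $\beta_{(\ff,\fg)}(\Gamma)$ is, after composing with $\beta_{(\ff,\fg)}$, exactly $\fg$; so the $\beta_{(\ff,\fg)}(\Gamma)$-action on $G$ is transitive and free iff $\fg$ is surjective and injective, i.e.\ $\fg\in Z_\ff^1(\Gamma,G)^*$. (Injectivity of $\beta_{(\ff,\fg)}$ itself follows since $\beta_{(\ff,\fg)}(\gamma)(1_G)=\fg(\gamma)$ forces $\beta_{(\ff,\fg)}$ injective once $\fg$ is, matching the remark that injectivity is automatic when $\beta(\Gamma)$ is regular.)

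I do not expect any genuinely hard step here; the whole proposition is a structural translation. The one place demanding care is the commutation relation $\theta\cdot\rho(a)=\rho(\theta(a))\cdot\theta$ together with the action conventions for $\lambda,\rho,\Aut(G)$ on $G$ — getting a sign or a left/right wrong there would flip the crossed-homomorphism identity to $\fg(\gamma\delta)=\ff(\delta)(\fg(\gamma))\cdot\fg(\delta)$ or similar. So the main ``obstacle'' is really just pinning down conventions consistently with~(\ref{Hol(G)}) and the definitions of $\lambda$ and $\rho$ given in the introduction, after which all three equalities drop out from the uniqueness of the $\rho(G)\rtimes\Aut(G)$ decomposition and evaluation at $1_G$.
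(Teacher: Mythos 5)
Your overall strategy is exactly the intended one: the paper itself gives no argument here beyond citing \cite[Proposition 2.1]{Tsang HG}, and the proof there is precisely the pointwise semidirect-product decomposition, the commutation relation $\theta\cdot\rho(a)=\rho(\theta(a))\cdot\theta$, and evaluation at $1_G$ that you describe. One computational slip, in the very place you flagged as delicate: with the paper's convention $\rho(\sigma)(x)=x\sigma^{-1}$ you get $\rho(a)(1_G)=a^{-1}$, not $a$, so $\beta_{(\ff,\fg)}(\gamma)(1_G)=\fg(\gamma)^{-1}$ rather than $\fg(\gamma)$. This does not damage the argument --- the composite of the orbit map with $\beta_{(\ff,\fg)}$ is $\gamma\mapsto\fg(\gamma)^{-1}$, which is bijective (resp.\ injective) exactly when $\fg$ is, so the equivalence ``$\beta_{(\ff,\fg)}(\Gamma)$ regular iff $\fg\in Z^1_\ff(\Gamma,G)^*$'' and the automatic injectivity of $\beta_{(\ff,\fg)}$ survive unchanged --- but the formula as written is inconsistent with the stated definition of $\rho$. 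Everything else, including the derivation of the crossed-homomorphism identity $\fg(\gamma\delta)=\fg(\gamma)\cdot\ff(\gamma)(\fg(\delta))$ from uniqueness of the decomposition, checks out.
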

\begin{proof}This follows easily from (\ref{Hol(G)}); see \cite[Proposition 2.1]{Tsang HG} for a proof and note that the argument there does not require $G$ and $\Gamma$ to be finite.
\end{proof}

The second description uses fixed point free pairs of homomorphisms. The use of such pairs already appeared in \cite{Byott Childs,Childs non-abelian,Childs PAMS} and our Proposition~\ref{description2} below is largely motivated by the arguments on \cite[pp. 83--84]{Childs non-abelian}.

\begin{definition}\label{fpf def}For any groups $\Gamma_1$ and $\Gamma_2$, a pair $(f,h)$ of homomorphisms from $\Gamma_1$ to $\Gamma_2$ is said to be \emph{fixed point free} if the equality $f(\gamma) = h(\gamma)$ holds precisely when $\gamma = 1_{\Gamma_1}$.
\end{definition}

Let $\Out(G)$ denote the outer automorphism group of $G$ and write
\[ \uppi_G : \Aut(G)\longrightarrow\Out(G);\hspace{1em}\uppi_G(\varphi) = \varphi\cdot\Inn(G) \]
for the natural quotient map. Given $\ff\in\Hom(\Gamma,\Aut(G))$, define
\begin{align}\label{Homf}
\Hom_\ff(\Gamma,\Aut(G)) & = \{\fh\in\Hom(\Gamma,\Aut(G)): \uppi_G\circ \ff = \uppi_G\circ \fh\},\\\notag
\Hom_\ff(\Gamma,\Aut(G))^\circ & = \{\fh\in\Hom_\ff(\Gamma,\Aut(G)):(\ff,\fh)\mbox{ is fixed point free}\}.
\end{align}
In view of Proposition~\ref{description1}, it is enough to consider $Z_\ff^1(\Gamma,G)^*$, which in turn is equal to $Z_\ff^1(\Gamma,G)^\circ$ when $G$ and $\Gamma$ are finite. The next proposition gives an alternative description of this latter set in the case that $G$ has trivial center. Let us remark that it may be regarded as a generalization of \cite[Proposition 2.4]{Tsang HG} and is only interesting when $G$ is non-abelian.

\begin{prop}\label{description2}Let $\ff\in\Hom(\Gamma,\Aut(G))$. For $\fg\in Z_\ff^1(\Gamma,G)$, define
\[ \fh_{(\ff,\fg)} : \Gamma\longrightarrow\Aut(G);\hspace{1em}\fh_{(\ff,\fg)}(\gamma) =  \conj(\fg(\gamma))\cdot\ff(\gamma),\]
where $\conj(-) = \lambda(-)\rho(-)$. Then, the map $\fh_{(\ff,\fg)}$ is always a homomorphism. Moreover, in the case that $G$ has trivial center, the maps
\begin{align}\label{h map1}
Z_\ff^1(\Gamma,G)\longrightarrow\Hom_\ff(\Gamma,\Aut(G));\hspace{1em}\fg\mapsto \fh_{(\ff,\fg)}
\\\label{h map2}
Z_\ff^1(\Gamma,G)^\circ \longrightarrow \Hom_\ff(\Gamma,\Aut(G))^\circ;\hspace{1em}\fg\mapsto \fh_{(\ff,\fg)}
\end{align}
are well-defined bijections.
\end{prop}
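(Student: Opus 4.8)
The plan is to verify the three assertions in turn: that $\fh_{(\ff,\fg)}$ is always a homomorphism, that the map \eqref{h map1} is well-defined, and that both \eqref{h map1} and \eqref{h map2} are bijective when $Z(G)$ is trivial. First I would check the homomorphism property by direct computation: using the crossed-homomorphism identity $\fg(\gamma\delta) = \fg(\gamma)\cdot\ff(\gamma)(\fg(\delta))$ together with the relation $\conj(\sigma)\cdot\varphi = \varphi\cdot\conj(\varphi^{-1}(\sigma))$ in $\Aut(G)$ (equivalently, $\varphi\circ\conj(\tau)\circ\varphi^{-1} = \conj(\varphi(\tau))$), one expands $\fh_{(\ff,\fg)}(\gamma)\fh_{(\ff,\fg)}(\delta) = \conj(\fg(\gamma))\ff(\gamma)\conj(\fg(\delta))\ff(\delta)$, moves $\ff(\gamma)$ past $\conj(\fg(\delta))$ to produce $\conj(\ff(\gamma)(\fg(\delta)))$, and recombines with $\conj(\fg(\gamma))$ to get $\conj(\fg(\gamma\delta))\ff(\gamma\delta) = \fh_{(\ff,\fg)}(\gamma\delta)$. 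This step uses only that $\lambda(G)\rho(G)$ normalizes suitably and requires no hypothesis on $Z(G)$.

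Next I would show \eqref{h map1} lands in $\Hom_\ff(\Gamma,\Aut(G))$: since $\conj(\fg(\gamma)) = \lambda(\fg(\gamma))\rho(\fg(\gamma))\in\Inn(G)$, we have $\uppi_G(\fh_{(\ff,\fg)}(\gamma)) = \uppi_G(\ff(\gamma))$ for all $\gamma$, so $\uppi_G\circ\fh_{(\ff,\fg)} = \uppi_G\circ\ff$, which is exactly the defining condition of $\Hom_\ff(\Gamma,\Aut(G))$. For the refinement to the $\circ$-decorated sets, I would observe that $\fg$ is injective if and only if $(\ff,\fh_{(\ff,\fg)})$ is fixed point free: indeed $\fh_{(\ff,\fg)}(\gamma) = \ff(\gamma)$ holds precisely when $\conj(\fg(\gamma)) = \mathrm{id}$, and since $Z(G)$ is trivial the map $\conj : G\to\Inn(G)$ is injective, so this happens precisely when $\fg(\gamma) = 1_G$; hence $\fh_{(\ff,\fg)}(\gamma)=\ff(\gamma)\Leftrightarrow\fg(\gamma)=1_G$, and $\fg$ is injective exactly when the only such $\gamma$ is $1_\Gamma$. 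This gives well-definedness of \eqref{h map2} once \eqref{h map1} is established, and simultaneously shows \eqref{h map2} is the restriction of \eqref{h map1}.

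For bijectivity I would construct the inverse explicitly. Given $\fh\in\Hom_\ff(\Gamma,\Aut(G))$, the condition $\uppi_G\circ\ff = \uppi_G\circ\fh$ means $\fh(\gamma)\ff(\gamma)^{-1}\in\Inn(G)$ for every $\gamma$, so by triviality of $Z(G)$ there is a \emph{unique} $\fg(\gamma)\in G$ with $\conj(\fg(\gamma)) = \fh(\gamma)\ff(\gamma)^{-1}$, i.e. $\fh(\gamma) = \conj(\fg(\gamma))\ff(\gamma)$. It remains to check that this $\fg$ is a crossed homomorphism with respect to $\ff$: from $\fh(\gamma\delta) = \fh(\gamma)\fh(\delta)$ one gets $\conj(\fg(\gamma\delta)) = \conj(\fg(\gamma))\cdot\ff(\gamma)\conj(\fg(\delta))\ff(\gamma)^{-1} = \conj(\fg(\gamma))\conj(\ff(\gamma)(\fg(\delta))) = \conj(\fg(\gamma)\cdot\ff(\gamma)(\fg(\delta)))$, and injectivity of $\conj$ forces $\fg(\gamma\delta) = \fg(\gamma)\cdot\ff(\gamma)(\fg(\delta))$, so $\fg\in Z_\ff^1(\Gamma,G)$. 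Uniqueness of $\fg$ gives injectivity of \eqref{h map1}, and the construction gives surjectivity; the $\circ$-version follows from the fixed-point-free criterion established above. I do not expect a genuine obstacle here — the only subtlety is bookkeeping the conjugation-twist identity $\varphi\conj(\tau)\varphi^{-1} = \conj(\varphi(\tau))$ consistently and remembering that triviality of $Z(G)$ is exactly what makes $\conj$ injective, hence what makes $\fg$ recoverable from $\fh$; without it the map \eqref{h map1} would generally fail to be injective.
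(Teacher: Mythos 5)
Your proposal is correct and follows essentially the same route as the paper: a direct computation with the identity $\varphi\circ\conj(\tau)\circ\varphi^{-1}=\conj(\varphi(\tau))$ for the homomorphism property, the explicit inverse $\fg(\gamma)=\conj^{-1}(\fh(\gamma)\ff(\gamma)^{-1})$ for bijectivity of (\ref{h map1}), and the observation that $\fh_{(\ff,\fg)}(\gamma)=\ff(\gamma)$ exactly when $\fg(\gamma)=1_G$ for the restriction (\ref{h map2}). The only detail worth spelling out is the one-line verification that a crossed homomorphism is injective precisely when its ``kernel'' $\{\gamma:\fg(\gamma)=1_G\}$ is trivial, which the paper handles by passing through $\fg(\gamma_1)=\fg(\gamma_2)\iff\fh_{(\ff,\fg)}(\gamma_1^{-1}\gamma_2)=\ff(\gamma_1^{-1}\gamma_2)$.
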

\begin{proof}First, let $\fg\in Z_\ff^1(\Gamma,G)$. For any $\gamma,\delta\in \Gamma$, we have
\begin{align*}
\fh_{(\ff,\fg)}(\gamma\delta) & = \conj(\fg(\gamma\delta))\cdot\ff(\gamma\delta)\\
& = \conj(\fg(\gamma))\ff(\gamma)\cdot \ff(\gamma)^{-1}\conj(\ff(\gamma)(\fg(\delta))) \ff(\gamma)\cdot\ff(\delta)\\
&=\conj(\fg(\gamma))\ff(\gamma)\cdot\conj(\fg(\delta))\ff(\delta)\\
&=\fh_{(\ff,\fg)}(\gamma)\fh_{(\ff,\fg)}(\delta)
\end{align*}
and so $\fh_{(\ff,\fg)}$ is a homomorphism. This means that (\ref{h map1}) is well-defined.

\vspace{1.5mm}

Now, suppose that $G$ has trivial center, in which case $\conj : G \longrightarrow \Inn(G)$ is an isomorphism. Given $\fh\in\Hom_\ff(\Gamma,\Aut(G))$, define
\[ \fg:\Gamma\longrightarrow G;\hspace{1em} \fg(\gamma) = \conj^{-1}(\fh(\gamma)\ff(\gamma)^{-1}),\]
where $\fh(\gamma)\ff(\gamma)^{-1}\in\Inn(G)$ since $\uppi_G\circ \fh = \uppi_G\circ \ff$. For any $\gamma,\delta\in\Gamma$, we have
\begin{align*}
\conj(\fg(\gamma\delta)) & = \fh(\gamma\delta)\ff(\gamma\delta)^{-1}\\
& = \fh(\gamma)\ff(\gamma)^{-1}\cdot \ff(\gamma)\fh(\delta)\ff(\delta)^{-1}\ff(\gamma)^{-1}\\
& = \conj(\fg(\gamma))\cdot\conj(\ff(\gamma)(\fg(\delta)))\\
& = \conj(\fg(\gamma)\cdot \ff(\gamma)(\fg(\delta)))
\end{align*}
and so $\fg$ is a crossed homomorphism with respect to $\ff$. Clearly $\fh = \fh_{(\ff,\fg)}$ and so this shows that (\ref{h map1}) is surjective. Let $\fg_1,\fg_2,\fg\in Z_\ff^1(\Gamma,G)$. For any $\gamma\in\Gamma$, since $\conj$ is an isomorphism, we have
\[\fg_1(\gamma) = \fg_2(\gamma)\iff \conj(\fg_1(\gamma)) = \conj(\fg_2(\gamma)) \iff \fh_{(\ff,\fg_1)}(\gamma) = \fh_{(\ff,\fg_2)}(\gamma) \]
and so (\ref{h map1}) is also injective. For any $\gamma_1,\gamma_2\in\Gamma$, similarly
\[ \fg(\gamma_1) = \fg(\gamma_2) \iff \conj(\fg(\gamma_1)) = \conj(\fg(\gamma_2)) \iff \fh_{(\ff,\fg)}(\gamma_1^{-1}\gamma_2) = \ff(\gamma_1^{-1}\gamma_2)\]
and this implies that (\ref{h map2}) is a well-defined bijection as well.
\end{proof}

In the case that $G$ is finite, observe that
\[ \#\HH_2(G) = \frac{1}{|\Aut(G)|}\cdot\#\Reg(G,\Hol(G)).\]
From Proposition~\ref{description1}, we then deduce that
\[ \#\HH_2(G) = \frac{1}{|\Aut(G)|}\cdot\#\{(\ff,\fg):\ff\in\Hom(G,\Aut(G)),\,\fg\in Z_\ff^1(G,G)^*\}.\]
By Proposition~\ref{description2}, when $G$ has trivial center, we further have
\begin{equation}\label{HH2 count}
\#\HH_2(G) = \frac{1}{|\Aut(G)|}\cdot\#\left\{(\ff,\fh):\begin{array}{c}
\ff\in\Hom(G,\Aut(G))\\ \fh\in\Hom_\ff(G,\Aut(G))^\circ\end{array}\right\}.
\end{equation}
This formula shall be useful for the proof of Theorem~\ref{main thm'}.

\vspace{1.5mm}

Finally, we shall give a necessary condition for a subgroup of $\Hol(G)$ to be normal in terms of the notation of Propositions~\ref{description1} and~\ref{description2}.

\begin{prop}\label{normal prop}Let $\ff\in\Hom(\Gamma,\Aut(G))$ and $\fg\in Z^1_\ff(\Gamma,G)$. If $\beta_{(\ff,\fg)}(\Gamma)$ is a normal subgroup of $\Hol(G)$, then both $\ff(\Gamma)$ and $\fh_{(\ff,\fg)}(\Gamma)$ are normal subgroups of $\Aut(G)$.
\end{prop}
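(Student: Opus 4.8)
The plan is to start from the hypothesis that $N := \beta_{(\ff,\fg)}(\Gamma)$ is normal in $\Hol(G)$ and to extract the two claimed normalities by looking at the images of $N$ under the two natural projections attached to the decomposition $\Hol(G) = \rho(G)\rtimes\Aut(G)$. Concretely, every element of $\Hol(G)$ is uniquely $\rho(\sigma)\varphi$ with $\sigma\in G$ and $\varphi\in\Aut(G)$, so there is a (set-theoretic, but multiplicative modulo $\rho(G)$) projection $\proj_{\Aut}:\Hol(G)\to\Aut(G)$ sending $\rho(\sigma)\varphi\mapsto\varphi$; since $\rho(G)$ is normal in $\Hol(G)$, this is a group homomorphism with kernel $\rho(G)$. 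Applying it to $\beta_{(\ff,\fg)}(\gamma) = \rho(\fg(\gamma))\ff(\gamma)$ gives exactly $\ff(\gamma)$, so $\proj_{\Aut}(N) = \ff(\Gamma)$. Now conjugation by an arbitrary $\varphi\in\Aut(G)\subseteq\Hol(G)$ preserves $N$ by normality and commutes with $\proj_{\Aut}$ (again because $\rho(G)$ is normal and $\Aut(G)$ meets it trivially), so $\varphi\ff(\Gamma)\varphi^{-1} = \proj_{\Aut}(\varphi N\varphi^{-1}) = \proj_{\Aut}(N) = \ff(\Gamma)$. Hence $\ff(\Gamma)\lhd\Aut(G)$.

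For the second assertion I would run the parallel argument using the \emph{other} complement-type decomposition of $\Hol(G)$ that underlies Proposition~\ref{description2}, namely the one in which the role of $\Aut(G)$ is played by $\conj(G)\cdot\ff(\gamma)$-type elements. The cleanest route: recall that $\conj(\sigma) = \lambda(\sigma)\rho(\sigma)$ and that $\lambda(G)$ also normalizes $\rho(G)$ inside $\Perm(G)$, with $\lambda(G)\cap\Aut(G) = \{1\}$ and $\lambda(G)\cap\rho(G) = Z(\lambda(G))$ — but more to the point, $\fh_{(\ff,\fg)}(\gamma) = \conj(\fg(\gamma))\ff(\gamma)$ differs from $\beta_{(\ff,\fg)}(\gamma) = \rho(\fg(\gamma))\ff(\gamma)$ only by the left-multiplication factor $\lambda(\fg(\gamma))$. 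So I expect to show that $\fh_{(\ff,\fg)}(\Gamma)$ is the image of $N$ under conjugation/translation inside the larger overgroup $\Norm_{\Perm(G)}(\rho(G)) \cdot \lambda(G)$, or equivalently that $\fh_{(\ff,\fg)}(\Gamma) = \Xi\big(N\big)$ for a suitable automorphism $\Xi$ of the ambient structure that intertwines $\Hol(G)$-conjugation; then normality of $N$ in $\Hol(G)$ transports to normality of $\fh_{(\ff,\fg)}(\Gamma)$ in $\Aut(G)$. Since $\fh_{(\ff,\fg)}(\Gamma)$ already lies in $\Aut(G)$ (Proposition~\ref{description2}), once I have it as the image of $N$ under a map that is equivariant for conjugation by $\Aut(G)$, the same one-line argument as above closes the case.

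A slightly different and perhaps more robust phrasing of the second part, which I would fall back on if the "transport" map is awkward to pin down: work directly in $\Hol(G)$ and conjugate $\beta_{(\ff,\fg)}(\gamma)$ by $\rho(\sigma)$ for $\sigma\in G$. One computes $\rho(\sigma)\,\rho(\fg(\gamma))\ff(\gamma)\,\rho(\sigma)^{-1} = \rho\big(\fg(\gamma)\cdot\ff(\gamma)(\sigma)^{-1}\sigma\big)\,\ff(\gamma)$ — wait, more carefully one gets an element of the form $\rho(\ast)\ff(\gamma)$ whose $\Aut$-part is still $\ff(\gamma)$ and whose $\rho$-part records how $\fg$ and $\ff$ interact; normality forces this to lie back in $N$, and comparing $\rho$-parts as $\sigma$ ranges over $G$ is precisely what makes $\conj(\fg(\gamma))\ff(\gamma)$, rather than merely $\ff(\gamma)$, into something stable under all of $\Aut(G)$. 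Tracking these coset representatives carefully — in particular checking that the element of $N$ one lands on has the \emph{same} $\Aut$-component $\ff(\gamma)$, so that $\fg$ is determined and the map $\gamma\mapsto\fh_{(\ff,\fg)}(\gamma)$ is the right invariant — is the main obstacle; it is bookkeeping in the semidirect product rather than anything deep, but it is the step where sign/inverse errors creep in, and where the trivial-center hypothesis (ensuring $\conj$ is injective, so the $\rho$-part genuinely pins down $\fg$) gets used.
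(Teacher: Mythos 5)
Your argument for $\ff(\Gamma)\lhd\Aut(G)$ is correct and is essentially the paper's, repackaged through the quotient map $\Hol(G)\to\Hol(G)/\rho(G)\simeq\Aut(G)$. The problem is the second assertion, where you do not actually have a proof: both of your proposed routes (a ``transport'' automorphism $\Xi$ that you never construct, and conjugation by $\rho(\sigma)$) are left as sketches with the decisive computation explicitly deferred, and neither points at the right conjugating elements. Normality of $\fh_{(\ff,\fg)}(\Gamma)$ in $\Aut(G)$ is about conjugation by $\varphi\in\Aut(G)$, so conjugating $N$ by $\rho(\sigma)$ is a detour; and there is no trivial-center hypothesis in this proposition, so an argument that ``uses the trivial-center hypothesis to pin down $\fg$'' is using something you are not given.

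The idea you are missing is that your own first step already contains everything needed, but your projection formulation discards it. Conjugating by $\varphi\in\Aut(G)$ and using normality gives $\varphi\beta_{(\ff,\fg)}(\gamma)\varphi^{-1}=\beta_{(\ff,\fg)}(\gamma_\varphi)$ for some $\gamma_\varphi\in\Gamma$, i.e.
\[
\rho(\varphi(\fg(\gamma)))\cdot\varphi\ff(\gamma)\varphi^{-1}=\rho(\fg(\gamma_\varphi))\cdot\ff(\gamma_\varphi),
\]
and uniqueness of the decomposition in $\rho(G)\rtimes\Aut(G)$ yields \emph{two} simultaneous identities with the \emph{same} $\gamma_\varphi$: the $\Aut$-components give $\varphi\ff(\gamma)\varphi^{-1}=\ff(\gamma_\varphi)$ (your first claim), while the $\rho$-components give $\varphi(\fg(\gamma))=\fg(\gamma_\varphi)$. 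By passing to $\proj_{\Aut}$ you keep only the first identity and throw away the second, which is exactly the one the second claim needs: since $\varphi\conj(\sigma)\varphi^{-1}=\conj(\varphi(\sigma))$ for any $\sigma\in G$, one gets
\[
\varphi\,\fh_{(\ff,\fg)}(\gamma)\,\varphi^{-1}=\conj(\varphi(\fg(\gamma)))\cdot\varphi\ff(\gamma)\varphi^{-1}=\conj(\fg(\gamma_\varphi))\cdot\ff(\gamma_\varphi)=\fh_{(\ff,\fg)}(\gamma_\varphi),
\]
with no injectivity of $\conj$ and no case analysis required. Keep the componentwise form of the normality relation rather than its image under the projection, and the second claim follows in two lines.
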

\begin{proof}Suppose that $\beta_{(\ff,\fg)}(\Gamma)$ is normal in $\Hol(G)$ and let $\varphi\in\Aut(G)$. Then, for any $\gamma\in\Gamma$, there exists $\gamma_\varphi\in\Gamma$ such that
\[ \varphi\beta_{(\ff,\fg)}(\gamma)\varphi^{-1} = \beta_{(\ff,\fg)}(\gamma_\varphi).\]
Rewriting this equation yields
\[ \rho(\varphi(\fg(\gamma)))\cdot\varphi \ff(\gamma)\varphi^{-1} = \rho(\fg(\gamma_\varphi))\cdot \ff(\gamma_\varphi).\]
Since (\ref{Hol(G)}) is a semi-direct product, this in turn gives
\[ \varphi(\fg(\gamma)) = \fg(\gamma_\varphi)\mbox{ and }\varphi\ff(\gamma)\varphi^{-1} = \ff(\gamma_\varphi).\]
The latter shows that $\ff(\Gamma)$ is normal in $\Aut(G)$. The above also implies that
\[ \varphi \fh_{(\ff,\fg)}(\gamma)\varphi^{-1} = \conj(\varphi(\fg(\gamma)))\cdot\varphi\ff(\gamma)\varphi^{-1} = \conj(\fg(\gamma_\varphi))\cdot\ff(\gamma_\varphi) = \fh_{(\ff,\fg)}(\gamma_\varphi),\]
and hence $\fh_{(\ff,\fg)}(\Gamma)$ is normal in $\Aut(G)$ as well.
\end{proof}

\section{Basic properties of almost simple groups}

In this section, let $S$ be an almost simple group and let $T$ be a non-abelian simple group such that $\Inn(T)\leq S \leq \Aut(T)$. Notice that $\Inn(T)$ is normal in $\Aut(T)$ and hence is normal in $S$ as well. Recall the known fact, which is easily verified, that for any $\varphi\in\Aut(T)$, we have
\begin{equation}\label{centralizer}
\varphi\circ\psi = \psi \circ\varphi \mbox{ for all }\psi\in\Inn(T)\mbox{ implies }\varphi = \mbox{Id}_T.
\end{equation}
This implies the next three basic properties of $S$ which we shall need.

\begin{lem}\label{center lem}The center of $S$ is trivial.
\end{lem}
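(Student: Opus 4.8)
The plan is to show directly that any element $\sigma$ in the center of $S$ must be the identity automorphism of $T$. Since $S \leq \Aut(T)$, an element $\sigma \in Z(S)$ is in particular an automorphism of $T$, and it commutes with every element of $S$. The key observation is that $\Inn(T) \leq S$, so $\sigma$ commutes with every inner automorphism of $T$.

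Concretely, I would invoke the property \eqref{centralizer} recalled just above: for $\varphi \in \Aut(T)$, if $\varphi \circ \psi = \psi \circ \varphi$ for all $\psi \in \Inn(T)$, then $\varphi = \mathrm{Id}_T$. Applying this with $\varphi = \sigma$ — which is legitimate because $\sigma \in Z(S)$ forces $\sigma$ to commute with the subgroup $\Inn(T)$ of $S$ — gives $\sigma = \mathrm{Id}_T$. Hence $Z(S)$ is trivial.

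For completeness of exposition I might include a one-line justification of \eqref{centralizer} itself, although the excerpt labels it as ``easily verified'': if $\varphi$ centralizes all of $\Inn(T)$, then for every $t \in T$ one has $\varphi(t x t^{-1}) = t\varphi(x)t^{-1}$ for all $x$, i.e. $(\varphi(t)^{-1}t)$ centralizes $\varphi(x)$ for all $x$; since $\varphi$ is surjective, $\varphi(t)^{-1}t \in Z(T) = 1$ (as $T$ is non-abelian simple, hence centerless), so $\varphi(t) = t$ for all $t$. But since \eqref{centralizer} is stated as known in the excerpt, I would simply cite it.

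There is no real obstacle here — the statement is essentially immediate from \eqref{centralizer} together with the hypothesis $\Inn(T) \leq S$. The only mild care needed is to make sure one is reading $Z(S)$ as computed inside $\Aut(T)$ (so its elements really are automorphisms of $T$ to which \eqref{centralizer} applies), which is automatic from the definition of almost simple given in the paper.
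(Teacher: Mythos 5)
Your proof is correct and is exactly the paper's argument: the paper's own proof is the one-line ``This follows directly from (\ref{centralizer})'', and your elaboration (an element of $Z(S)$ commutes with all of $\Inn(T)\leq S$, so (\ref{centralizer}) forces it to be $\mathrm{Id}_T$) together with the sketch verifying (\ref{centralizer}) via $Z(T)=1$ is precisely what is intended.
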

\begin{proof}This follows directly from (\ref{centralizer}). \end{proof}

\begin{lem}\label{socle lem}Every non-trivial normal subgroup of $S$ contains $\Inn(T)$.
\end{lem}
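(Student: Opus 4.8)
The plan is to exploit the fact that $\Inn(T)$ is a minimal normal subgroup of $S$ (indeed the unique one), which follows because $T$ is simple. Let $M$ be a non-trivial normal subgroup of $S$. Then $M\cap\Inn(T)$ is normal in $S$, hence in particular normal in $\Inn(T)\simeq T$. Since $T$ is simple, either $M\cap\Inn(T) = \Inn(T)$, in which case $\Inn(T)\leq M$ and we are done, or $M\cap\Inn(T) = 1$.

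So the real content is to rule out the case $M\cap\Inn(T) = 1$. Suppose this holds. Then for $m\in M$ and $\psi\in\Inn(T)$, the commutator $[m,\psi] = m\psi m^{-1}\psi^{-1}$ lies in $M$ (because $M\lhd S$) and also in $\Inn(T)$ (because $\Inn(T)\lhd S$), hence $[m,\psi] = 1$. Thus every element of $M$ centralizes $\Inn(T)$. But by \eqref{centralizer}, the only element of $\Aut(T)$ — and hence of $S$ — that centralizes all of $\Inn(T)$ is $\mathrm{Id}_T$. Therefore $M = 1$, contradicting the assumption that $M$ is non-trivial. Hence $M\cap\Inn(T) = \Inn(T)$, i.e. $\Inn(T)\leq M$.

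I do not anticipate a serious obstacle here; the argument is the standard one showing that a minimal normal subgroup with trivial centralizer is contained in every non-trivial normal subgroup. The only point requiring a little care is the observation that $[M,\Inn(T)]\subseteq M\cap\Inn(T)$, which uses normality of both subgroups in $S$, together with the correct invocation of \eqref{centralizer} applied to elements of $S\leq\Aut(T)$ rather than arbitrary automorphisms. Writing it out should take only a few lines.
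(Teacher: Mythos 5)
Your proof is correct and follows essentially the same route as the paper: intersect with $\Inn(T)$, use simplicity of $T$ to reduce to the case of trivial intersection, then use the commutator argument together with \eqref{centralizer} to force the subgroup to be trivial. (If anything, your version is slightly more careful than the paper's, which asserts normality of $R$ in $\Aut(T)$ where normality in $S$ is what is actually available and all that is needed.)
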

\begin{proof}Let $R$ be any normal subgroup of $S$ such that $R\not\supset\Inn(T)$, or equivalently $R\cap\Inn(T)\neq\Inn(T)$. Then, since $R\cap\Inn(T)$ is normal in $\Inn(T)$ and $\Inn(T)\simeq T$ is simple, this means that $R\cap\Inn(T)= 1$. For any $\varphi\in R$, since both $R$ and $\Inn(T)$ are normal in $\Aut(T)$, we have
\[ \psi\circ\varphi\circ\psi^{-1}\circ\varphi^{-1} \in R\cap\Inn(T)\mbox{ for all }\psi\in\Inn(T).\]
We then deduce from (\ref{centralizer}) that $\varphi = \mbox{Id}_T$ and so $R$ is trivial.
\end{proof}

\begin{lem}\label{Aut(G) lem}There is an injective homomorphism 
\[\iota:\Aut(S)\longrightarrow\Aut(T)\]
such that the composition
\[\begin{tikzcd}[column sep =2cm]
S\arrow{r} & \Inn(S) \arrow{r}{\mbox{\tiny inclusion}} & \Aut(S) \arrow{r}{\iota} &\Aut(T)
\end{tikzcd}\]
is the inclusion map, where the first arrow is the map $\varphi\mapsto(x\mapsto\varphi x\varphi^{-1})$.
\end{lem}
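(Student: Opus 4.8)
The plan is to construct $\iota$ directly and then verify injectivity and the claimed compatibility. Since $\Inn(T)$ is the socle of $S$ (it is the unique minimal normal subgroup by Lemma~\ref{socle lem}, or at least it is a characteristic subgroup of $S$), any $\alpha\in\Aut(S)$ must restrict to an automorphism of $\Inn(T)$. Identifying $\Inn(T)\simeq T$ via the canonical isomorphism (recall $T$ has trivial center, being non-abelian simple, so $T\simeq\Inn(T)$), this gives a homomorphism $\Aut(S)\to\Aut(\Inn(T))\simeq\Aut(T)$, $\alpha\mapsto\alpha|_{\Inn(T)}$; I would define $\iota$ to be this restriction map, post-composed with the identification $\Aut(\Inn(T))\simeq\Aut(T)$. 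The key point making this work is that $\Aut(T)$ is complete (it has trivial center and no outer automorphisms), so that $\Aut(\Inn(T))$ is naturally $\Aut(T)$ acting by conjugation inside $\Perm(T)$, and every automorphism of $\Inn(T)\simeq T$ is realized by conjugation by a unique element of $\Aut(T)$.

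First I would record that $\Inn(T)$ is characteristic in $S$: it is a non-trivial normal subgroup contained in every non-trivial normal subgroup by Lemma~\ref{socle lem}, hence it is the socle and is preserved by every automorphism of $S$. Next I would set up the identification: because $Z(T)=1$, the conjugation map $T\to\Inn(T)$ is an isomorphism, and because $\Aut(T)$ is itself complete — which is the standard fact underlying (\ref{centralizer}), namely $C_{\Aut(T)}(\Inn(T))=1$ forces $\Aut(\Aut(T)) = \Inn(\Aut(T)) \simeq \Aut(T)$ — we get a canonical isomorphism $\Aut(\Inn(T))\simeq\Aut(T)$ sending an automorphism of $\Inn(T)$ to the unique element of $\Aut(T)$ that induces it. Composing restriction with this identification defines $\iota$; it is a homomorphism because restriction and the identification both are.

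For injectivity: suppose $\iota(\alpha) = \mathrm{Id}_T$, i.e. $\alpha$ fixes $\Inn(T)$ pointwise. For $\varphi\in S$, write $c_\varphi = (x\mapsto\varphi x\varphi^{-1})\in\Inn(S)\leq\Aut(T)$; then $\alpha$ commutes with $c_\varphi$ on... more precisely, for $\psi\in\Inn(T)$ we have $\alpha(\varphi\psi\varphi^{-1}) = \alpha(\varphi)\psi\alpha(\varphi)^{-1}$ on one hand (since $\alpha$ is an automorphism of $S$ fixing $\psi$) and $= \varphi\psi\varphi^{-1}$ on the other (since $\varphi\psi\varphi^{-1}\in\Inn(T)$ is fixed), so $\alpha(\varphi)^{-1}\varphi$ centralizes $\Inn(T)$ in $S\leq\Aut(T)$, whence $\alpha(\varphi)^{-1}\varphi=\mathrm{Id}_T$ by (\ref{centralizer}) and $\alpha(\varphi)=\varphi$. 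Thus $\alpha=\mathrm{Id}_S$. Finally, for the compatibility triangle: an element $\varphi\in S$ maps to $c_\varphi\in\Inn(S)\leq\Aut(S)$, and $\iota(c_\varphi)$ is by construction the unique element of $\Aut(T)$ inducing the restriction of $c_\varphi$ to $\Inn(T)$; but conjugation by $\varphi\in S\leq\Aut(T)$ already induces that same inner automorphism of $\Inn(T)$, so by the uniqueness built into the identification $\Aut(\Inn(T))\simeq\Aut(T)$ we get $\iota(c_\varphi)=\varphi$, which is exactly the assertion that the composite is the inclusion $S\hookrightarrow\Aut(T)$.

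The main obstacle — really the only subtle point — is pinning down the canonical isomorphism $\Aut(\Inn(T))\simeq\Aut(T)$ cleanly, i.e. justifying that every automorphism of $\Inn(T)$ comes from conjugation by a \emph{unique} element of $\Aut(T)$ inside $\Perm(T)$; this is the completeness of $\Aut(T)$ and is where (\ref{centralizer}) does the work (uniqueness from triviality of the centralizer, existence from $\mathrm{Out}(\Aut(T))=1$). Once that identification is fixed, everything else is a short diagram chase using (\ref{centralizer}) exactly as in the injectivity argument above. I would state completeness of $\Aut(T)$ as a known fact with a one-line pointer rather than reprove it.
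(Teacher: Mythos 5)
Your proposal is correct and follows essentially the same route as the paper: restrict automorphisms of $S$ to the characteristic subgroup $\Inn(T)$, use (\ref{centralizer}) to get injectivity, identify $\Aut(\Inn(T))$ with $\Aut(T)$, and verify the compatibility via the relation $\varphi\circ\conj(\sigma)\circ\varphi^{-1}=\conj(\varphi(\sigma))$. The only (harmless) difference is that you invoke completeness of $\Aut(T)$ to realize every automorphism of $\Inn(T)$ as conjugation by a unique element of $\Aut(T)$, whereas the paper gets the same identification more cheaply by transporting structure along the isomorphism $\conj:T\to\Inn(T)$ (existence is then automatic, and uniqueness is exactly (\ref{centralizer})), so no appeal to $\Out(\Aut(T))=1$ is needed.
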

\begin{proof}Put $T^\#= \Inn(T)$, which is the socle of $S$ by Lemma~\ref{socle lem}, and hence is a characteristic subgroup of $S$. Thus, there is a well-defined homomorphism
\begin{equation}\label{embedding} \Aut(S) \longrightarrow \Aut(T^\#);\hspace{1em}\theta\mapsto\theta|_{T^\#}. \end{equation}
Suppose that $\theta$ is in its kernel. For any $\varphi\in S$, we then have
\[\theta(\varphi)\circ\psi\circ\theta(\varphi)^{-1} = \theta(\varphi\circ\psi\circ\varphi^{-1}) = \varphi\circ\psi\circ\varphi^{-1}\mbox{ for all }\psi\in T^\#.\]
From (\ref{centralizer}), we deduce that $\theta(\varphi) = \varphi$, and thus $\theta=\mbox{Id}_S$. This shows that the homomorphism (\ref{embedding}) is injective. Let us identify $T$ with $T^\#$ via $\sigma\mapsto\conj(\sigma)$, where $\conj(-)=\lambda(-)\rho(-)$. We then obtain an injective homomorphism
\[ \iota : \Aut(S) \longrightarrow \Aut(T)\]
from (\ref{embedding}). Since for any $\varphi\in S$ and $\sigma\in T$, we have the relation
\[ \varphi\circ\conj(\sigma)\circ\varphi^{-1} = \conj(\varphi(\sigma)) \mbox{ in }\Aut(T),\]
the stated composition is indeed the inclusion map.
\end{proof}

\section{Proof of the theorems}\label{proof sec}

In this section, we shall prove Theorems~\ref{main thm} and~\ref{main thm'}.

\subsection{Some consequences of the CFSG}
Our proof relies on the following consequences of the classification theorem of finite simple groups. 

\begin{lem}\label{ASG lem} Let $T$ be a finite non-abelian simple group.
\begin{enumerate}[(a)]
\item There is no fixed point free automorphism on $T$.
\item The outer automorphism group $\Out(T)$ of $T$ is solvable.
\item The inequality $|T|/|\Out(T)|\geq30$ holds.
\end{enumerate}
\end{lem}
\begin{proof}They are all consequences of the classification theorem of finite simple groups. See \cite[Theorems 1.46 and 1.48]{G book} for parts (a) and (b). See \cite[Lemma 2.2]{Quick} for part (c).
\end{proof}

Lemma~\ref{ASG lem} (c) in particular implies the following corollaries.

\begin{cor}\label{ASG cor}Let $T$ be a finite non-abelian simple group. Then, any finite group $S$ of order less than $30|\Aut(T)|$ cannot have subgroups $T_1$ and $T_2$, both of which are isomorphic to $T$, such that $T_1\cap T_2=1$.
\end{cor}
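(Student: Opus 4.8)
The plan is to derive the bound directly from an elementary product-set count, using Lemma~\ref{ASG lem}(c) as the only substantive input. Suppose, for contradiction, that $S$ is a finite group with subgroups $T_1, T_2 \simeq T$ satisfying $T_1 \cap T_2 = 1$. The first step is to observe that the product set $T_1 T_2 \subseteq S$ has cardinality
\[ \#(T_1 T_2) = \frac{\#T_1 \cdot \#T_2}{\#(T_1 \cap T_2)} = |T|^2, \]
and hence $|S| \geq |T|^2$.

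The second step is to rewrite $|\Aut(T)|$ in a convenient form. Since $T$ is non-abelian simple it has trivial center, so $\Inn(T)\simeq T$ and therefore $|\Aut(T)| = |\Inn(T)|\cdot|\Out(T)| = |T|\cdot|\Out(T)|$. Combining this with Lemma~\ref{ASG lem}(c), which states $|T| \geq 30\,|\Out(T)|$, I get
\[ |S| \geq |T|^2 = |T|\cdot|T| \geq |T|\cdot 30\,|\Out(T)| = 30\,|\Aut(T)|, \]
which contradicts the hypothesis that $|S| < 30\,|\Aut(T)|$.

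There is essentially no obstacle here: the only two ingredients are the standard identity for the size of a product of two subgroups with trivial intersection, and the passage from Lemma~\ref{ASG lem}(c) to a lower bound on $|\Aut(T)|$ via the center-freeness of $T$. The one point to be careful about is simply that the genuinely deep content is imported wholesale through part (c) of Lemma~\ref{ASG lem}, which rests on the classification of finite simple groups; everything else in the corollary is bookkeeping.
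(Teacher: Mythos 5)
Your proof is correct and follows essentially the same route as the paper: both count $\#(T_1T_2)=|T|^2$ and then invoke Lemma~\ref{ASG lem}(c) via the identity $|\Aut(T)|=|T|\cdot|\Out(T)|$ to get the bound $|S|\geq 30|\Aut(T)|$. No issues.
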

\begin{proof}Suppose that $S$ is a finite group having subgroups $T_1$ and $T_2$, both of which are isomorphic to $T$, such that $T_1\cap T_2=1$. Then, we have
\[ |T_1T_2| = |T_1||T_2| = |\Inn(T)||T| = |\Aut(T)||T|/|\Out(T)|.\]
Since $T_1T_2$ is a subset of $S$, from Lemma~\ref{ASG lem} (c), it follows that $S$ must have order at least $30|\Aut(T)|$.
\end{proof}

\begin{cor}\label{ASG cor'}Let $T$ be a finite non-abelian simple group. Then, the inner automorphism group $\Inn(T)$ is the only subgroup of $\Aut(T)$ isomorphic to $T$.
\end{cor}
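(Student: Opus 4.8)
The plan is to derive Corollary~\ref{ASG cor'} as a direct consequence of Corollary~\ref{ASG cor}, taking $S = \Aut(T)$. First I would suppose, for contradiction, that there is a subgroup $T' \leq \Aut(T)$ with $T' \simeq T$ and $T' \neq \Inn(T)$. The natural move is to compare $T'$ with $T_1 := \Inn(T)$ and try to invoke the previous corollary, so I need to produce two isomorphic copies of $T$ inside $\Aut(T)$ that intersect trivially.

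The key step is to show $T' \cap \Inn(T) = 1$. Here I would argue as follows: $T' \cap \Inn(T)$ is a normal subgroup of $\Inn(T)$ (since $\Inn(T)$ is normal in $\Aut(T)$, hence $T'\cap\Inn(T)$ is normal in $T'$, but more directly it is normal in $\Inn(T)$ because... actually the cleanest route is that $T'\cap\Inn(T)$ is normalized by $\Inn(T)$ as $\Inn(T)\lhd\Aut(T)$ normalizes $T'\cap\Inn(T)$? no). Let me instead reason: $\Inn(T) \simeq T$ is simple, so its only normal subgroups are $1$ and $\Inn(T)$; since $T' \cap \Inn(T)$ is normal in $\Inn(T)$ (being the intersection of the normal subgroup $\Inn(T)\lhd\Aut(T)$ with $T'$... wait, I want normality in $\Inn(T)$, which follows because $\Inn(T)$ normalizes $T'$ — but that is not given). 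The correct argument: $T'\cap\Inn(T)$ is normal in $T'$ because $\Inn(T)\lhd\Aut(T)\supseteq T'$; and $T'\cap\Inn(T)$ is also normal in $\Inn(T)$ because $T'\lhd ?$ — this fails too. So I would argue via simplicity of $T'$: $T'\cap\Inn(T)\lhd T'$, and $T'\simeq T$ is simple, so either $T'\cap\Inn(T)=1$ or $T'\cap\Inn(T)=T'$, i.e.\ $T'\leq\Inn(T)$; in the latter case $|T'|=|T|=|\Inn(T)|$ forces $T'=\Inn(T)$, contradicting $T'\neq\Inn(T)$. Hence $T'\cap\Inn(T)=1$.

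Now I would apply Corollary~\ref{ASG cor} with $S = \Aut(T)$, $T_1 = \Inn(T)$, $T_2 = T'$: since these are two subgroups isomorphic to $T$ with $T_1 \cap T_2 = 1$, the corollary tells us $|\Aut(T)| \geq 30|\Aut(T)|$, which is absurd. Therefore no such $T'$ exists, proving the claim.

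The main obstacle, such as it is, is getting the triviality of the intersection $T' \cap \Inn(T)$ phrased correctly — one must use the simplicity of $T' \simeq T$ (so that $T'\cap\Inn(T)$, being normal in $T'$, is trivial or all of $T'$) together with the order comparison, rather than trying to appeal to normality of the intersection inside $\Inn(T)$, which need not hold a priori. Once that is in place, the order inequality from Lemma~\ref{ASG lem}(c), repackaged in Corollary~\ref{ASG cor}, closes the argument immediately.
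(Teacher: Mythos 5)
Your proof is correct and follows essentially the same route as the paper: both rest on the observation that $T'\cap\Inn(T)$ is normal in $T'$ (since $\Inn(T)\lhd\Aut(T)$), invoke simplicity of $T'\simeq T$ to force the intersection to be trivial or all of $T'$, and then use Corollary~\ref{ASG cor} with $S=\Aut(T)$ to rule out the trivial-intersection case. The paper phrases it directly (the intersection cannot be trivial, hence $R\subseteq\Inn(T)$, hence equality by order) rather than by contradiction, but the content is identical.
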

\begin{proof}Let $R$ be any subgroup of $\Aut(T)$ isomorphic to $T$. Since $\Inn(T)\cap R$ is normal in $R$, and it cannot be trivial by Corollary~\ref{ASG cor}, it must be equal to the entire $R$. It follows that $R\subset \Inn(T)$, and we have equality because these are finite groups of the same order.
\end{proof}

\subsection{Proof of Theorem~\ref{main thm}}

Let $G$ be any finite almost simple group, say
\[ \Inn(T) \leq G \leq \Aut(T),\]
where $T$ is a finite non-abelian simple group. From Lemma~\ref{Aut(G) lem}, we see that the group $\Aut(G)$ is also almost simple, as well as that
\[ \Inn(T^\#) \leq \Aut(G)\leq \Aut(T^\#),\]
where $T^\#$ is a group isomorphic to $T$. Now, consider a regular subgroup $N$ of $\Hol(G)$. By Proposition~\ref{description1}, we may write it as
\[ N = \{\rho(\fg(\gamma))\cdot\ff(\gamma):\gamma\in\Gamma\},\mbox{ where }\begin{cases}\ff\in\Hom(\Gamma,\Aut(G))\\\fg\in Z_\ff^1(\Gamma,G)^*
\end{cases}\]
and $\Gamma$ is a group isomorphic to $N$. By Proposition~\ref{description2}, we may define
\[\fh\in\Hom(\Gamma,\Aut(G));\hspace{1em}\fh(\gamma) = \conj(\fg(\gamma))\cdot\ff(\gamma),\]
and $(\ff,\fh)$ is fixed point free since $G$ has trivial center by Lemma~\ref{center lem}. 

It is clear that
\[ \begin{cases}
N\subset\rho(G) & \mbox{if $\ff(\Gamma)$ is trivial},\\
N\subset \lambda(G) & \mbox{if $\fh(\Gamma)$ is trivial},
\end{cases}\]
which must be equalities because $N$ is regular. In what follows, assume that both $\ff(\Gamma)$ and $\fh(\Gamma)$ are non-trivial. Also, suppose for contradiction that $N$ is normal in $\Hol(G)$. Then,  by Proposition~\ref{normal prop}, both $\ff(\Gamma)$ and $\fh(\Gamma)$ are normal subgroups of $\Aut(G)$, they contain $\Inn(T^\#)$ by Lemma~\ref{socle lem}. Hence, we have
\begin{equation}\label{fh images} \Inn(T^\#) \leq \ff(\Gamma),\fh(\Gamma)\leq \Aut(G)\leq\Aut(T^\#),\end{equation}
which means that $\ff(\Gamma)$ and $\fh(\Gamma)$ are almost simple as well. 
\begin{enumerate}[(a)]
\item\textbf{Suppose that both $\ker(\ff)$ and $\ker(\fh)$ are non-trivial.} 
\\ Note that $\ker(\ff)\cap\ker(\fh)=1$ because $(\ff,\fh)$ is fixed point free. This means that $\ff$ restricts to an injective homomorphism
\[\hspace{5mm}\mbox{res}(\ff): \ker(\fh) \longrightarrow \Aut(G),\mbox{ and }\ff(\ker(\fh))\mbox{ is non-trivial}.\]
Since $\ff(\ker(\fh))$ is normal in $\ff(\Gamma)$, from Lemma~\ref{socle lem} and (\ref{fh images}), we see that $\Inn(T^\#)\leq \ff(\ker(\fh))$, and so there is a subgroup $\Delta_\fh$ of $\ker(\fh)$ isomorphic to $T$. Similarly, there is a subgroup $\Delta_\ff$ of $\ker(\ff)$ isomorphic to $T$. But
\[\hspace{5mm} \Delta_\ff\cap\Delta_\fh \subset \ker(\ff)\cap \ker(\fh)\mbox{ and so }\Delta_\ff\cap\Delta_\fh = 1.\]
This contradicts Corollary~\ref{ASG cor} because $\Gamma$ has the same order as $G$ and $G$ is contained in $\Aut(T)$ by assumption.
\vspace{1.5mm}
\item \textbf{Suppose that $\ker(\ff)$ is trivial but $\ker(\fh)$ is non-trivial.}
\\ Note that $\ff$ is injective, so $\ff$ induces an isomorphism
\begin{equation}\label{iso}\hspace{5mm}\frac{\Gamma}{\ker(\fh)}\simeq  \frac{\ff(\Gamma)}{\ff(\ker(\fh))},\mbox{ and }\ff(\ker(\fh))\mbox{ is non-trivial}.\end{equation}
On the one hand, the first quotient group in (\ref{iso}) is isomorphic to $\fh(\Gamma)$, which is insolvable by (\ref{fh images}). On the other hand, since $\ff(\ker(\fh))$ is normal in $\ff(\Gamma)$, by Lemma~\ref{socle lem} and (\ref{fh images}), we have $\Inn(T^\#)\leq \ff(\ker(\fh))$. There are natural homomorphisms
\[\hspace{5mm}\frac{\ff(\Gamma)}{\Inn(T^\#)}\xrightarrow{\text{surjective}}\frac{\ff(\Gamma)}{\ff(\ker(\fh))} \mbox{ and }\frac{\ff(\Gamma)}{\Inn(T^\#)}\xrightarrow{\text{injective}} \Out(T^\#).\]
Since $\Out(T^\#)$ is solvable by Lemma~\ref{ASG lem} (b), the second quotient group in (\ref{iso}) is also solvable, and this is a contradiction.
\vspace{1.5mm}
\item \textbf{Suppose that $\ker(\fh)$ is trivial but $\ker(\ff)$ is non-trivial.}
\\By symmetry, we obtain a contradiction as in case (b).
\vspace{1.5mm}
\item\textbf{Suppose that both $\ker(\ff)$ are $\ker(\fh)$ are trivial.}
\\  Note that $\Inn(T^\#)$ is the unique subgroup of $\Aut(G)$ isomorphic to $T$ by Corollary~\ref{ASG cor'}. Similarly, since $\Gamma\simeq\ff(\Gamma)$, from (\ref{fh images}) we see that  $\Gamma$ contains a unique subgroup $\Delta$ isomorphic to $T$. Since both $\ff$ and $\fh$ are injective, they restrict to isomorphisms
\[\hspace{5mm} \mbox{res}(\ff),\mbox{res}(\fh) : \Delta\longrightarrow\Inn(T^\#),\mbox{ and }\mbox{res}(\ff)^{-1}\circ\mbox{res}(\fh)\in\Aut(\Delta)\]
is fixed point free because the pair $(\ff,\fh)$ is fixed point free. This contradicts Lemma~\ref{ASG lem} (a).
\end{enumerate}
We have thus shown that in order for $N$ to be normal in $\Hol(G)$, either $\ff(\Gamma)$ or $\fh(\Gamma)$ must be trivial, and consequently $N$ is equal to $\lambda(G)$ or $\rho(G)$. Hence, indeed $\lambda(G)$ are $\rho(G)$ are the only elements of $\HH_1(G)$, as desired.

\subsection{Proof of Theorem~\ref{main thm'}: The first claim} Let $G$ be any finite almost simple group. Since $G$ has trivial center by Lemma~\ref{center lem}, we have
\begin{equation}\label{HH2 count'}\#\HH_2(G) = \frac{1}{|\Aut(G)|}\cdot\#\left\{(\ff,\fh):\begin{array}{c}
\ff\in\Hom(G,\Aut(G))\\ \fh\in\Hom_\ff(G,\Aut(G))^\circ\end{array}\right\}\end{equation}
as in (\ref{HH2 count}). For $(\ff,\fh)$ as above, we have $\ker(\ff)\cap\ker(\fh)=1$ since $(\ff,\fh)$ is fixed point free, and so at least one of $\ff$ and $\fh$ is injective by Lemma~\ref{socle lem}. 

\vspace{1.5mm}

Suppose now that $\Inn(G)$ is the only subgroup isomorphic to $G$ in $\Aut(G)$. If $\fh$ is injective, then $\fh(G)$ must be equal to $\Inn(G)$, and by definition (\ref{Homf}), we deduce that $\ff(G)$ has to lie in $\Inn(G)$. Then, since $\Inn(G)\simeq G$, we may identify any pair $(\ff,\fh)$ in (\ref{HH2 count'}) with $\fh$ injective as a pair $(f,h)$, where
\[ f\in\End(G)\mbox{ and } h\in\Aut(G)\mbox{ are such that }(f,h)\mbox{ is fixed point free}.\]
It follows that
\begin{align*}
&\hspace{5.5mm}\#\{(\ff,\fh):\ff\in\Hom(G,\Aut(G)),\,\fh\in\Hom_\ff(G,\Aut(G))^\circ,\,\fh\mbox{ is injective}\}\\
&= \#\{(f,h):f\in\End(G),\, h\in\Aut(G),\, (f,h)\mbox{ is fixed point free}\}\\
& = \#\{(f,h):f\in\End(G),\, h\in\Aut(G),\, h^{-1}\circ f\in\End_{\fpf}(G)\}\\
& = |\Aut(G)|\cdot \#\End_{\fpf}(G).
\end{align*}
By the symmetry between $\ff$ and $\fh$, we similarly have
\begin{align*}
&\hspace{5.5mm}\#\{(\ff,\fh):\ff\in\Hom(G,\Aut(G)),\,\fh\in\Hom_\ff(G,\Aut(G))^\circ,\,\ff\mbox{ is injective}\}\\
& = |\Aut(G)|\cdot \#\End_{\fpf}(G).
\end{align*}
We now conclude that
\begin{align*}
\#\HH_2(G) &= \frac{1}{|\Aut(G)|}\cdot2\cdot|\Aut(G)|\cdot\#\End_{\fpf}(G) \\ &= 2\cdot\#\End_{\fpf}(G)\end{align*}
and this proves the first claim in Theorem~\ref{main thm'}.


\subsection{Proof of Theorem~\ref{main thm'}: The second claim}

Observe that
\[\End_{\fpf}(G) = \bigsqcup_{H\lhd G}\{f\in\End_{\fpf}(G): \ker(f) = H\}\]
and let us begin by proving the following general statement.

\begin{lem}\label{lem}Let $G$ be any group and let $p$ be a prime. Then, for any normal subgroup $H$ of $G$ of index $p$ and any element $\sigma$ of $G$ of order $p$, we have
\[\#\{f\in\End_{\fpf}(G) : \ker(f) = H\mbox{ and }f(G)=\langle\sigma\rangle\}=\begin{cases}
p-1&\mbox{if }\sigma\in H,\\p-2 &\mbox{if }\sigma\notin H.
\end{cases}\]
\end{lem}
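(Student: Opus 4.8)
The plan is to count the homomorphisms $f \colon G \to G$ with prescribed kernel $H$ and prescribed image $\langle\sigma\rangle$, and then impose the fixed point free condition. Since $H$ has index $p$, the quotient $G/H$ is cyclic of order $p$, so any such $f$ factors as $G \twoheadrightarrow G/H \xrightarrow{\bar f} \langle\sigma\rangle$, where $\bar f$ is an isomorphism of cyclic groups of order $p$. There are exactly $p-1$ such isomorphisms $\bar f$, and hence exactly $p-1$ homomorphisms $f \in \End(G)$ with $\ker(f) = H$ and $f(G) = \langle\sigma\rangle$. So the task reduces to determining, among these $p-1$ maps, how many are fixed point free.

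First I would fix a generator: pick $x \in G \setminus H$, so that $G/H = \langle xH \rangle$ has order $p$. Each of the $p-1$ homomorphisms is determined by $f(x) = \sigma^{j}$ for $j \in \{1, \dots, p-1\}$, and then $f(x^k h) = \sigma^{jk}$ for all $h \in H$, $0 \le k < p$. Now I analyze when such an $f$ has a nontrivial fixed point, i.e.\ when $f(g) = g$ for some $g \ne 1_G$. On elements of $H$ we have $f(h) = 1_G$, so $f(h) = h$ forces $h = 1_G$; thus fixed points can only occur on cosets $x^k H$ with $1 \le k \le p-1$. Suppose $g = x^k h$ with $f(g) = g$; then $g = f(g) = \sigma^{jk} \in \langle\sigma\rangle$, so $g$ lies in $\langle\sigma\rangle$ and has order dividing $p$; since $g \notin H$ (as $k \not\equiv 0$), $g \ne 1_G$ and so $g$ has order exactly $p$, i.e.\ $g$ is a generator of $\langle\sigma\rangle$, say $g = \sigma^{i}$ with $\gcd(i,p)=1$. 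The equation $f(g) = g$ becomes $\sigma^{j i} = \sigma^{i}$ — wait, I must be careful: $g = \sigma^i = x^k h$, and $f$ applied to $g$ written in the form $x^k h$ gives $\sigma^{jk}$, so the condition is $\sigma^{jk} = \sigma^{i}$, i.e.\ $jk \equiv i \pmod p$.

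So the key dichotomy is whether $\sigma \in H$ or not. If $\sigma \notin H$: then $\sigma = x^{k_0} h_0$ for some $k_0 \not\equiv 0 \pmod p$; each generator $\sigma^i$ of $\langle\sigma\rangle$ lies in the coset $x^{i k_0} H$, so $k = i k_0$, and the fixed-point equation $jk \equiv i \pmod p$ becomes $j \, i k_0 \equiv i \pmod p$, i.e.\ $j k_0 \equiv 1 \pmod p$ (dividing by the unit $i$), which determines $j$ uniquely. Hence exactly one of the $p-1$ homomorphisms fails to be fixed point free, giving $p-2$ fixed point free ones. If $\sigma \in H$: then $\langle\sigma\rangle \subseteq H$, so every element of $\langle\sigma\rangle$ lies in the coset $x^0 H = H$; thus any candidate fixed point $g = \sigma^i \in \langle\sigma\rangle$ has $k \equiv 0 \pmod p$, but we showed fixed points can only occur for $k \not\equiv 0$ — contradiction. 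Hence no $f$ in this family has a nontrivial fixed point, and all $p-1$ of them are fixed point free. This matches the claimed count. The main obstacle is bookkeeping: carefully tracking which coset of $H$ a given power of $\sigma$ lies in, and handling the case $\sigma \notin H$ where the image $\langle\sigma\rangle$ meets several cosets — everything else is routine. I would also double-check the edge case $p = 2$: if $\sigma \notin H$, then $p - 2 = 0$, consistent with the fact that the unique homomorphism (sending $x \mapsto \sigma$) then satisfies $f(\sigma) = \sigma$; and if $\sigma \in H$, then $p-1 = 1$ fixed point free map, namely the one with $f(x) = \sigma$, which is indeed fixed point free since $\sigma \in H = \ker f$ would force... actually $f(x) = \sigma \in H$ so $f(x^2) = \sigma^2 = 1$ and $f$ has image $\langle \sigma\rangle$ of order $2$, and one checks directly it has no nontrivial fixed point. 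This confirms the formula.
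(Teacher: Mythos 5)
Your proposal is correct and follows essentially the same route as the paper: both enumerate the $p-1$ homomorphisms with the prescribed kernel and image via a choice of coset representative generating $G/H$, and then show that a nontrivial fixed point must be a generator of $\langle\sigma\rangle$, reducing the fixed-point condition to a congruence mod $p$ that has no solution when $\sigma\in H$ and exactly one solution when $\sigma\notin H$. Your write-up is, if anything, slightly more explicit than the paper's in tracking which coset each power of $\sigma$ occupies.
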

\begin{proof}Fix $\tau\in G$ such that $\tau H$ generates $G/H$. Then, we have exactly $p-1$ endomorphisms $f_1,\dots,f_{p-1}$ on $G$ with kernel equal to $H$ and image equal to $\langle\sigma\rangle$. Explicitly, for each $1\leq k\leq p-1$, we may define $f_k$ by setting
\begin{eqnarray*}
f_k(H) & = & \{1\},\\
f_k(\tau H) & = & \{\sigma^k\},\\
&\vdots&\\
f_k(\tau^{p-1}H) & = & \{\sigma^{k(p-1)}\}.
\end{eqnarray*}
Observe that $f_k$ is fixed point free if and only if
\[ \sigma^{ki}\in\tau^iH\mbox{ for some }i=1,\dots,p-1.\]
Since $\sigma$ and $\tau H$ have order $p$, this in turn is equivalent to $\sigma^k \in \tau H$, and
\[ \begin{cases}
\sigma^k\notin\tau H\mbox{ for all }k=1,\dots,p-1&\mbox{if }\sigma\in H,\\
\sigma^k\in\tau H\mbox{ for exactly one }k=1,\dots,p-1&\mbox{if }\sigma\notin H.
\end{cases}\]
We then see that the claim holds.
\end{proof}
Now, let $G$ be any finite almost simple group such that $\Soc(G)$ has prime index $p$ in $G$, in which case by Lemma~\ref{socle lem}, there are exactly three subgroups in $G$, namely $1$, $\Soc(G)$, and $G$. Hence, we have
\[\#\End_{\fpf}(G) = \sum_{H\in\{1,\Soc(G),G\}}\#\{f\in\End_{\fpf}(G): \ker(f) = H\}.\]
Observe that
\begin{align*}
\#\{f\in\End_{\fpf}(G): \ker(f) = 1\} & = 0,\\
\#\{f\in\End_{\fpf}(G): \ker(f) = G\} & = 1,
\end{align*}
where the former follows from Lemma~\ref{ASG lem} (a) and the latter is trivial. For the case $H=\Soc(G)$, let us first rewrite
\begin{align*}
&\hspace{5.5mm}\#\{f\in\End_{\fpf}(G): \ker(f) = \Soc(G)\}\\
&=\sum_{P\leq G,|P|=p}\#\{f\in\End_{\fpf}(G): \ker(f) = \Soc(G)\mbox{ and }f(G)= P\}\\
&=\frac{1}{p-1}\sum_{\sigma\in G,|\sigma|=p}\#\{f\in\End_{\fpf}(G): \ker(f) = \Soc(G)\mbox{ and }f(G)= \langle\sigma\rangle\}.
\end{align*}
Applying Lemma~\ref{lem} then yields
\begin{align*}
&\hspace{5.5mm}\#\{f\in\End_{\fpf}(G): \ker(f) = \Soc(G)\}\\
&=\frac{1}{p-1}\left(\sum_{\sigma\in \Soc(G),|\sigma|=p}(p-1)
+\sum_{\sigma\notin\Soc(G),|\sigma|=p}(p-2)\right)\\
& = \#\{\sigma\in\Soc(G): \sigma\mbox{ has order $p$}\}\\&\hspace{0.9cm} + (p-2)/(p-1)\cdot\#\{\sigma\in G\setminus\Soc(G) : \sigma\mbox{ has order $p$}\}.
\end{align*}
This completes the proof of the second claim in Theorem~\ref{main thm'}.

\section{Acknowledgments}

The author would like to thank the referee for helpful comments.

\end{document}